\DeclareMathOperator*{\argmin}{argmin}
\DeclareMathOperator*{\argmax}{argmax}
\DeclarePairedDelimiter\floor{\lfloor}{\rfloor}
\newcommand{\bq}{\begin{equation}}
\newcommand{\eq}{\end{equation}}
\newcommand{\R}{\mathbb{R}}
\newcommand{\Z}{\mathbb{Z}}
\newcommand{\abs}[1]{\left\vert#1\right\vert}
\newcommand{\G}{\mathcal{G}}
\newcommand{\bO}{\mathcal{O}}
\newcommand{\dist}{\text{dist}}
\newcommand{\Dt}{\mathcal{D}}
\newcommand{\Af}{\mathcal{A}}
\newcommand{\Sf}{\mathcal{S}}
\newcommand{\Zf}{\mathbb{Z}}
\newcommand{\Nf}{\mathbb{N}}
\newcommand{\xvec}{\mathbf{x}}
\newcommand{\MA}{Monge-Amp\`ere\xspace}
\newcommand{\la}[0]{\lambda}
\newcommand{\itan}[1]{\tan^{-1}\left({#1}\right)}
\newcommand{\x}[0]{\times}
\newcommand{\A}[0]{\forall}
\newcommand{\Or}{\mathcal{O}}
\newcommand{\lp}[0]{\left(}
\newcommand{\rp}[0]{\right)}
\newcommand{\figwidth}[0]{.7\textwidth} 
\algnewcommand{\LineComment}[1]{\State \(\triangleright\) #1}
\newtheorem{theorem}{Theorem}
\theoremstyle{lemma}
\newtheorem{lemma}[theorem]{Lemma}
\newtheorem{cor}[theorem]{Corollary}
\newtheorem{definition}[theorem]{Definition}
\newtheorem{remark}[theorem]{Remark}
\newtheorem{hypothesis}[theorem]{Hypothesis}
\theoremstyle{remark}
\newcommand\appendix@section[1]{%
\refstepcounter{section}%
\orig@section*{Appendix \@Alph\c@section: #1}%
}
\let\orig@section\section
\g@addto@macro\appendix{\let\section\appendix@section}
\begin{document}

\title[Finite difference methods for three-dimensional PDEs]{convergent finite difference methods for fully nonlinear elliptic equations in three dimensions}

\author{Brittany Froese Hamfeldt}
\address{Department of Mathematical Sciences, New Jersey Institute of Technology, University Heights, Newark, NJ 07102}
\email{bdfroese@njit.edu}
\author{Jacob Lesniewski}
\address{Department of Mathematical Sciences, New Jersey Institute of Technology, University Heights, Newark, NJ 07102}
\email{jl779@njit.edu}

\thanks{The first author was partially supported by NSF DMS-1619807 and NSF DMS-1751996. The second author was partially supported by NSF DMS-1619807.}

\begin{abstract}
We introduce a generalized finite difference method for solving a large range of fully nonlinear elliptic partial differential equations in three dimensions.  Methods are based on Cartesian grids, augmented by additional points carefully placed along the boundary at high resolution.  We introduce and analyze a least-squares approach to building consistent, monotone approximations of second directional derivatives on these grids.  We then show how to efficiently approximate functions of the eigenvalues of the Hessian through a multi-level discretization of orthogonal coordinate frames in $\R^3$.  The resulting schemes are monotone and fit within many recently developed convergence frameworks for fully nonlinear elliptic equations including non-classical Dirichlet problems that admit discontinuous solutions, Monge-Amp\`ere type equations in optimal transport, and eigenvalue problems involving nonlinear elliptic operators.  Computational examples demonstrate the success of this method on a wide range of challenging examples.

\end{abstract}

\date{\today}    
\maketitle

\section{Introduction}\label{sec:intro}

In this article, we introduce and implement a convergent finite difference method for solving a large class of fully nonlinear elliptic partial differential equations (PDEs) on general three-dimensional domains.  
The method we develop encompasses a range of challenging problems including Pucci's maximal and minimal equations, obstacle problems, prescribed curvature equations, \MA type equations arising in optimal transport, and eigenvalue problems involving nonlinear PDEs.  

\subsection{Background}
Fully nonlinear elliptic partial differential equations (PDEs) appear in a variety of applications including optimal transport, seismology \cite{EF_FWI}, astrophysics \cite{2002Nature}, mathematical finance \cite{fleming2006controlled}, materials science~\cite{Hoffman}, and molecular engineering~\cite{Bates}. These problems are challenging because they often include discontinuous or sharp jumps in the data, involve intricate domains, and may require data to satisfy a solvability condition that is not known \emph{a priori}.

In recent years, the numerical solution of these equations has received a great deal of attention, and several new methods have been developed including finite difference methods~\cite{BFO_MA,FinnGrid,Loeper,Saumier,SulmanWilliamsRussell}, finite element methods~\cite{Awanou,Bohmer,BrennerNeilanMA2D,Smears}, least squares methods~\cite{DGnum2006}, and methods involving fourth-order regularization terms~\cite{FengNeilan}.  However, these methods are not designed to compute weak solutions. When the ellipticity of the equation is degenerate or no smooth solution exists, methods become very slow, are unstable, or converge to an incorrect solution.

A couple convergence frameworks have emerged in recent years. The approach of~\cite{FengLewis} introduces the concept of generalized monotonicity, similar in flavor to the fourth-order regularization of~\cite{FengNeilan}, to produce convergent methods for a class of Hamilton-Jacobi-Bellman equations.
Another powerful framework, which informs the method described in the present article, is the approach of Barles and Souganidis~\cite{BSnum}, which shows that consistent, monotone methods converge if the limiting PDE satisfies a comparison principle.  A variety of methods have been developed within this framework~\cite{FinlayOberman,FroeseMeshfreeEigs,FO_MATheory,ObermanEigenvalues,mirebeau2015MA}.  Moreover, these convergence proofs have recently been extended to non-classical Dirichlet problems~\cite{Hamfeldt_Gauss}, optimal transport type boundary conditions~\cite{Hamfeldt_BVP2}, and eigenvalue problems involving nonlinear PDEs~\cite{HL_lagrangian}.

\subsection{Contributions of this work}
The method we describe applies to nonlinear equations that depend on various second directional derivatives,
\bq\label{eq:PDE1}
F(\xvec,u(\xvec),u_{\bm{\nu}\bm{\nu}}(\xvec); \bm{\nu}\in\Af\subset\mathbb{S}^2) = 0,
\eq
where the admissible set $\Af$ is used to characterize a finite set of unit vectors in $\R^3$.  We also consider functions of the eigenvalues $\lambda_1, \lambda_2, \lambda_3$ of the Hessian matrix,
\bq\label{eq:PDE2}
F(\lambda_1(D^2u(\xvec)),\lambda_2(D^2u(\xvec)),\lambda_3(D^2u(\xvec))) \equiv G\left(\sum\limits_{j=1}^3\phi(\lambda_j(D^2u(\xvec)))\right)= 0,
\eq
where $\phi$ is a concave function and $G$ is non-increasing and continuous.  We note that this encompasses a range of elliptic operators including PDEs of \MA type and various curvature equations.
Finally, we consider eigenvalue problems of the form
\bq\label{eq:PDE3}
F(\xvec,D^2u(\xvec)) = c
\eq
where both the function $u$ and the constant $c\in\R$ are unknown.
Moreover, we show how to enforce a range of boundary conditions including Dirichlet conditions, Neumann conditions, and the second type boundary condition $\nabla u(\Omega_1) \subset \bar{\Omega}_2$ arising in optimal transport.

The starting point of our method is the generalized finite difference methods of~\cite{FroeseMeshfreeEigs}, which produced consistent, monotone schemes for a large class of nonlinear elliptic equations using very general two-dimensional point clouds.  While much of the convergence theory applies to general bounded domains in $\R^n$, the transition to three dimensions introduces several new challenges that are not present in two dimensions. 

A first challenge that we face is the discretization of the boundary of three-dimensional domains. In order to preserve both consistency and monotonicity in the entire domain, it is necessary to over-resolve the boundary in a precise way.  This precludes the use of many standard structure grids such as Cartesian meshes.  We describe a new approach to discretizing our domains that preserves a great deal of structure (which is needed for efficient evaluation of the nonlinear operators), while fitting within the precise requirements needed to construct monotone schemes.

A second challenge is the approximation of second directional derivatives.  In two dimensions, this can be accomplished explicitly even on very complicated meshes.  In three dimensions, explicit formulas are no longer possible in general.  Instead, we describe an optimization approach that provably yields a consistent, monotone approximation.

A third major challenge is discretizing general functions of the eigenvalues of the Hessian.  In two dimensions, these two eigenvalues can be represented as the maximum and minimum possible second directional derivatives.  Three dimensions introduces a third eigenvalue, and this simple approach does not easily generalize in a way that preserves monotonicity.  We introduce an alternate approach that allows for monotone approximation of a large range of functions of the eigenvalues of the Hessian.

Finally, we note that the shift to a higher dimension brings the practical concerns of memory and processing speed to the forefront.  A naive implementation of our proposed discretization would be computationally intractable.  Instead, we introduce techniques derived from the structure of the underlying problems and approximations to describe an efficient method for producing a consistent, monotone approximation of many fully nonlinear elliptic PDEs in three dimensions. 

\subsection{Contents}
In \autoref{sec:background}, we review the theory of generalized finite difference approximations for fully nonlinear elliptic equations. In \autoref{sec:numerics}, we describe the three-dimensional discretization of~\eqref{eq:PDE1}-\eqref{eq:PDE3}.  In \autoref{sec:implementation}, we discuss practical considerations relating to the efficient construction of our discretization.  In \autoref{sec:results}, we provide computational results for a large range of challenging problems.  Finally, in \autoref{sec:conclusion}, we provide conclusions and perspective.

\section{Approximation of elliptic equations}\label{sec:background}
In this section, we briefly review relevant results on the construction and convergence of numerical methods for solving fully nonlinear elliptic equations.

\subsection{Elliptic equations}
The PDE operators we consider in this work are degenerate elliptic.
\bq\label{eq:PDEelliptic} F(\xvec,u(\xvec),D^2u(\xvec)) = 0, \quad \xvec\in\bar{\Omega}\subset\R^3.\eq
\begin{definition}[Degenerate elliptic]\label{def:elliptic}
The operator
$F:\bar{\Omega}\times\R\times\Sf^2\to\R$
is \emph{degenerate elliptic} if 
\[ F(\xvec,u,X) \leq F(\xvec,v,Y) \]
whenever $u \leq v$ and $X \geq Y$.
\end{definition}
We note that the definition of the operator is extended onto the boundary of the domain, and includes the relevant boundary conditions.

The PDE operators~\eqref{eq:PDE1}-\eqref{eq:PDE2} that we consider in this work are degenerate elliptic if they are non-decreasing functions of the argument $u$ and non-increasing functions of all subsequent arguments (which involve second directional derivatives).

Since degenerate elliptic equations need not have classical solutions, solutions need to be interpreted in a weak sense.  The numerical methods developed in this article are guided by the very powerful concept of the viscosity solution~\cite{CIL}.

\begin{definition}[Upper and lower semi-continuous envelopes]\label{def:envelope}
The \emph{upper and lower semi-continuous envelopes} of a function $u(\xvec)$ are defined, respectively, by
\[ u^*(\xvec) = \limsup_{\mathbf{y}\to \xvec}u(\mathbf{y}), 
\quad u_*(\xvec) = \liminf_{\mathbf{y}\to \xvec}u(\mathbf{y}). \]
\end{definition}

\begin{definition}[Viscosity solution]\label{def:viscosity}
An upper (lower) semi-continuous function $u$ is a \emph{viscosity subsolution (supersolution)} of~\eqref{eq:PDEelliptic} if for every $\phi\in C^2(\bar{\Omega})$, whenever $u-\phi$ has a local maximum (minimum)  at $\xvec \in \bar{\Omega}$, then
\[ 
F_*^{(*)}(\xvec,u(\xvec),D^2\phi(\xvec)) \leq (\geq)  0 .
\]
A function $u$ is a \emph{viscosity solution} of~\eqref{eq:PDEelliptic} if $u^*$ is a subsolution and $u_*$ a supersolution.
\end{definition}

An important property of many elliptic equations is the comparison principle, which immediately implies uniqueness of the solution.
\begin{definition}[Comparison principle]\label{def:comparison}
A PDE has a \emph{comparison principle} if whenever $u$ is an upper semi-continuous subsolution and $v$ a lower semi-continuous supersolution of the equation, then $u \leq v$ on $\bar{\Omega}$.
\end{definition}

\subsection{Approximation}\label{sec:discBackground}
In order to construct convergent approximations of elliptic operators, we will rely on the framework introduced by Barles and Souganidis~\cite{BSnum} and further extended by Oberman~\cite{ObermanEP} and the authors of this work~\cite{Hamfeldt_Gauss,Hamfeldt_BVP2,HL_lagrangian}.

We consider finite difference schemes that have the form
\bq\label{eq:approx} F^h(\xvec,u(\xvec),u(\xvec)-u(\cdot)) = 0 \eq
where $h$ is a small parameter relating to the grid resolution.

The convergence framework requires notions of consistency and monotonicity, defined below.

\begin{definition}[Consistency]\label{def:consistency}
The scheme~\eqref{eq:approx} is \emph{consistent} with the equation~\eqref{eq:PDEelliptic} if for any smooth function $\phi$ and $x\in\bar{\Omega}$,
\[ \limsup_{\epsilon\to0^+,\mathbf{y}\to \xvec,\xi\to0} F^h(\mathbf{y},\phi(\mathbf{y})+\xi,\phi(\mathbf{y})-\phi(\cdot)) \leq F^*(\xvec,\phi(\xvec),\nabla\phi(\xvec),D^2\phi(\xvec)), 
\]
\[ \liminf_{\epsilon\to0^+,\mathbf{y}\to \xvec,\xi\to0} F^h(\mathbf{y},\phi(\mathbf{y})+\xi,\phi(\mathbf{y})-\phi(\cdot)) \geq F_*(\xvec,\phi(\xvec),\nabla\phi(\xvec),D^2\phi(\xvec)). \]
\end{definition}

\begin{definition}[Monotonicity]\label{def:monotonicity}
The scheme~\eqref{eq:approx} is monotone if $F^h$ is a non-decreasing function of its final two arguments.
\end{definition}

Schemes that satisfy these two properties respect the notion of the viscosity solution at the discrete level.  In particular, these schemes preserve the maximum principle and are guaranteed to converge to the solution of the underlying PDE under a range of interesting settings.

Another important property of schemes is stability, which allows discrete solutions to be bounded uniformly.
\begin{definition}[Stability]\label{def:stability}
The scheme~\eqref{eq:approx} is stable if there exists some $M\in\R$ such that if $u^h$ is any solution of~\eqref{eq:approx} then $\|u^h\|_\infty \leq M$.
\end{definition}
Under mild conditions relating to the well-posedness of the limiting PDE and continuity of~\eqref{eq:approx} in its final two arguments, consistent and monotone schemes are automatically stable~\cite[Lemmas~35-36]{Hamfeldt_Gauss}.

The convergence framework of Barles and Souganidis applies to PDEs that satisfy a comparison principle.
\begin{theorem}[Convergence~\cite{ObermanEP}]\label{thm:convergeVisc}
Let $u$ be the unique viscosity solution of the PDE~\eqref{eq:PDEelliptic}, where $F$ is a degenerate elliptic operator with a comparison principle.  Let the approximation $F^h$ be consistent, monotone, and stable and $u^h$ any solution of the scheme~\eqref{eq:approx}.  Then $u^h$ converges uniformly to $u$ as $h\to0$.
\end{theorem}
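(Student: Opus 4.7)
The plan is to follow the classical Barles--Souganidis argument via half-relaxed limits, adapted to the present framework. Concretely, I would begin by using stability (Definition~\ref{def:stability}) to conclude that the family of discrete solutions $\{u^h\}$ is uniformly bounded. This lets me define the upper and lower half-relaxed limits
\[
\bar{u}(\xvec) = \limsup_{h\to 0,\,\mathbf{y}\to \xvec} u^h(\mathbf{y}), \qquad \underline{u}(\xvec) = \liminf_{h\to 0,\,\mathbf{y}\to \xvec} u^h(\mathbf{y}),
\]
which are respectively upper and lower semi-continuous and bounded on $\bar{\Omega}$, and satisfy $\underline{u}\le\bar{u}$ pointwise.

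The core of the argument is to show that $\bar{u}$ is a viscosity subsolution and $\underline{u}$ a viscosity supersolution of~\eqref{eq:PDEelliptic}. I would carry out the subsolution argument; the other is analogous. Let $\phi\in C^2(\bar{\Omega})$ and suppose $\bar{u}-\phi$ attains a strict local maximum at $\xvec_0\in\bar{\Omega}$. By a standard compactness/perturbation argument, there exist sequences $h_n\to 0$ and $\mathbf{y}_n\to \xvec_0$ such that $\mathbf{y}_n$ is a local maximum of $u^{h_n}-\phi$ and $u^{h_n}(\mathbf{y}_n)\to \bar{u}(\xvec_0)$. Setting $\xi_n = u^{h_n}(\mathbf{y}_n)-\phi(\mathbf{y}_n)\to 0$, the maximum property gives $u^{h_n}(\mathbf{y}_n)-u^{h_n}(\cdot)\ge \phi(\mathbf{y}_n)-\phi(\cdot) + 0$, so by the monotonicity (Definition~\ref{def:monotonicity}) applied in the last argument of $F^{h_n}$, and then in the state argument via $\xi_n$,
\[
0 = F^{h_n}\bigl(\mathbf{y}_n, u^{h_n}(\mathbf{y}_n), u^{h_n}(\mathbf{y}_n)-u^{h_n}(\cdot)\bigr) \;\ge\; F^{h_n}\bigl(\mathbf{y}_n, \phi(\mathbf{y}_n)+\xi_n, \phi(\mathbf{y}_n)-\phi(\cdot)\bigr).
\]
Taking $\limsup$ as $n\to\infty$ and invoking consistency (Definition~\ref{def:consistency}) yields $F_*(\xvec_0,\phi(\xvec_0),D^2\phi(\xvec_0)) \le 0$, which is exactly the subsolution inequality at $\xvec_0$. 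The symmetric argument, using a local minimum and the $\liminf$ side of consistency, shows $\underline{u}$ is a supersolution.

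Having established these two facts, I would close the argument using the comparison principle (Definition~\ref{def:comparison}) for~\eqref{eq:PDEelliptic}: since $\bar{u}$ is an upper semi-continuous subsolution and $\underline{u}$ a lower semi-continuous supersolution, we obtain $\bar{u}\le\underline{u}$ on $\bar{\Omega}$. Combined with the reverse inequality that holds automatically, $\bar{u}=\underline{u}$ on $\bar{\Omega}$, and this common value must coincide with the unique viscosity solution $u$. Finally, the equality $\limsup = \liminf$ of the half-relaxed limits, together with the boundedness and the compactness of $\bar{\Omega}$, upgrades the pointwise convergence to uniform convergence $u^h\to u$ as $h\to 0$.

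The step I expect to be the main obstacle, or at least the place where care is required, is the perturbation argument that produces the sequence of discrete local maxima $\mathbf{y}_n$ converging to $\xvec_0$ with $u^{h_n}(\mathbf{y}_n)\to \bar{u}(\xvec_0)$. This is the standard ``strict maximum'' trick (replacing $\phi$ by $\phi + \lvert \xvec-\xvec_0\rvert^2$ if necessary and passing to maximizers over a compact neighborhood), but it must be compatible with the irregular discrete geometry described later in the paper, particularly the over-resolved boundary points. The other subtle point is the correct handling of the boundary: because $F$ encodes the boundary condition and its semi-continuous envelopes $F^*, F_*$ are used in Definitions~\ref{def:consistency} and \ref{def:viscosity}, the above argument goes through uniformly on $\bar{\Omega}$ without a separate treatment of $\partial\Omega$, provided comparison holds up to the boundary.
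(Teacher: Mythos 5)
Your proposal is correct and is essentially the standard Barles--Souganidis half-relaxed-limits argument, which is exactly how the cited sources \cite{BSnum,ObermanEP} establish this result; the paper itself states the theorem without proof, referring to that literature. (Only a cosmetic slip: to obtain $F_*\le 0$ you invoke the $\liminf$ inequality in Definition~\ref{def:consistency}, and the $\limsup$/$F^*$ inequality for the supersolution half, but since $0\ge F^{h_n}$ pointwise the chain of inequalities goes through as you wrote it.)
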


Recently, convergence results have also been obtained for a variety of equations that do not have a traditional comparison principle.  This includes non-classical Dirichlet problems that admit discontinuous solutions~\cite{Hamfeldt_Gauss}, \MA type equations equipped with optimal transport type boundary constraints~\cite{Hamfeldt_BVP2}, and eigenvalue problems of the form~\eqref{eq:PDE3} that require solving for the solution of both a fully nonlinear PDE and an unknown scalar constant~\cite{HL_lagrangian}.

\subsection{Generalized Finite Difference Methods}\label{sec:genFD}
The results discussed in the previous section provide a powerful tool for proving convergence of monotone schemes.  However, they do not offer insight into the actual construction of monotone methods.  Indeed, it is well known that there are linear elliptic operators for which no finite difference stencil of bounded width admits a consistent, monotone discretization~\cite{Kocan, MotzkinWasow}.  

Wide stencil methods on Cartesian grids have been developed for a variety of fully nonlinear elliptic PDEs~\cite{mirebeau2015MA,ObermanEigenvalues}.  However, these can fail to preserve both consistency and monotonicity at points near the boundary, particularly in the absence of Dirichlet boundary conditions.
For insight into the construction of globally consistent and monotone schemes on general domains, we turn to the two-dimensional meshfree finite difference methods developed in~\cite{FroeseMeshfreeEigs}.

We begin by introducing some notation, which applies equally well to higher dimensional settings.
\begin{enumerate}
\item[(N1)] $\Omega\subset\R^d$ is a bounded domain with Lipschitz boundary $\partial\Omega$. 
\item[(N2)] $\G\subset\bar{\Omega}$ is a point cloud consisting of the points $\xvec_i$, $i=1,\ldots,N$.
\item[(N3)] $h = \sup\limits_{\xvec\in{\Omega}}\min\limits_{\mathbf{y}\in\G}\abs{\xvec-\mathbf{y}}$ is the spatial resolution of the point cloud.  In particular, every ball of radius $h$ contained in $\bar{\Omega}$ contains at least one discretization point $\xvec_i$.
\item[(N4)] $h_B = \sup\limits_{\xvec\in{\partial\Omega}}\min\limits_{\mathbf{y}\in\G\cap\partial\Omega}\abs{\xvec-\mathbf{y}}$ is the resolution of the point cloud on the boundary.  In particular, every ball of radius $h_B$ centered at a boundary point $\xvec\in\partial\Omega$ contains at least one discretization point $\xvec_i \in \G\cap\partial\Omega$ on the boundary.
\item[(N5)] $\delta = \min\limits_{\xvec\in\Omega\cap\G^h}\inf\limits_{\mathbf{y}\in\partial\Omega}\abs{\xvec-\mathbf{y}}$ is the distance between the set of interior discretization points and the boundary.  In particular, if $\xvec_i\in\G^h\cap\Omega$ and $\xvec_j\in\partial\Omega$, then the distance between $\xvec_i$ and $\xvec_j$ is at least $\delta$.
\item[(N6)] $\epsilon$ is a search radius associated with the point cloud.
\end{enumerate}

Using this notation, we can pose some necessary hypotheses on the point cloud and related discretization parameters.
\begin{hypothesis}[Hypotheses on point cloud]\label{hyp:grid}
We require the discretization $\G$ of $\bar{\Omega}$ to satisfy:
\begin{enumerate}
\item[(H1)] The boundary resolution satisfies $h_B/\delta\to0$ as $h\to0$.
\item[(H2)] The search radius satisfies both $\epsilon\to0$ and $h/\epsilon\to0$ as $h\to0$.
\end{enumerate}
\end{hypothesis}

Suppose we wish to approximate the second directional derivative $u_{\bm{\nu}\bm{\nu}}$ at some point $\xvec_0\in\G^h\cap\Omega$.  We begin by considering as candidate neighbors all nodes within a distance $r$ of $\xvec_0$.  From this we select four points $\xvec_1,\xvec_2,\xvec_3,\xvec_4 \in \G^h\cap B(\xvec_0,\epsilon)$, one in each of the four quadrants defined by the axes $\xvec_0+\bm{\nu} t$ and $\xvec_0 + \bm{\nu}^\perp t$, that are as well-aligned as possible with the line $\xvec_0 + \bm{\nu} t$.  See Figure~\ref{fig:stencil2D}.

\begin{figure}[htp]
\centering
\subfigure[]{
\includegraphics[width=0.4\textwidth]{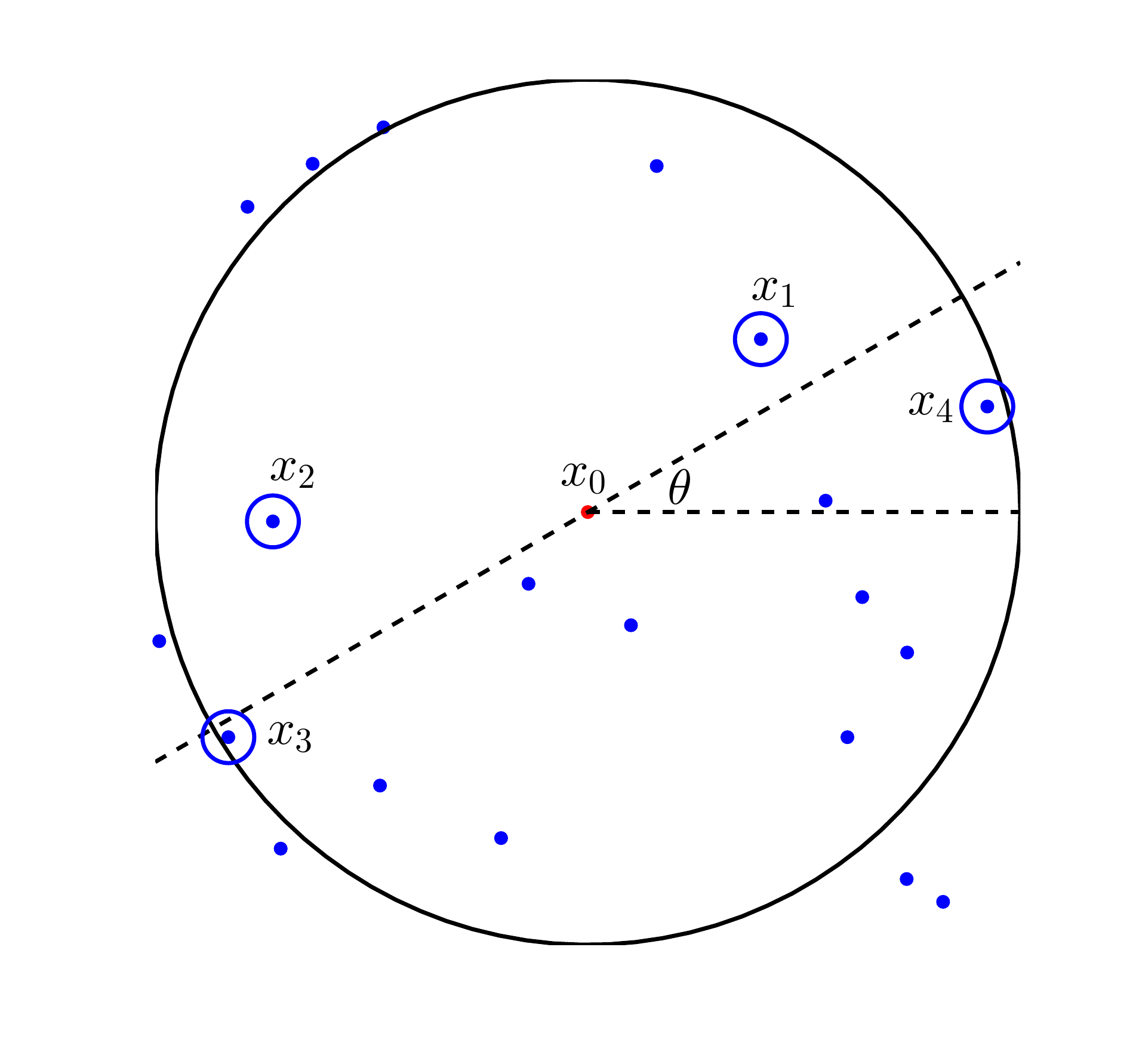}\label{fig:stencil1}}
\subfigure[]{
\includegraphics[width=0.47\textwidth]{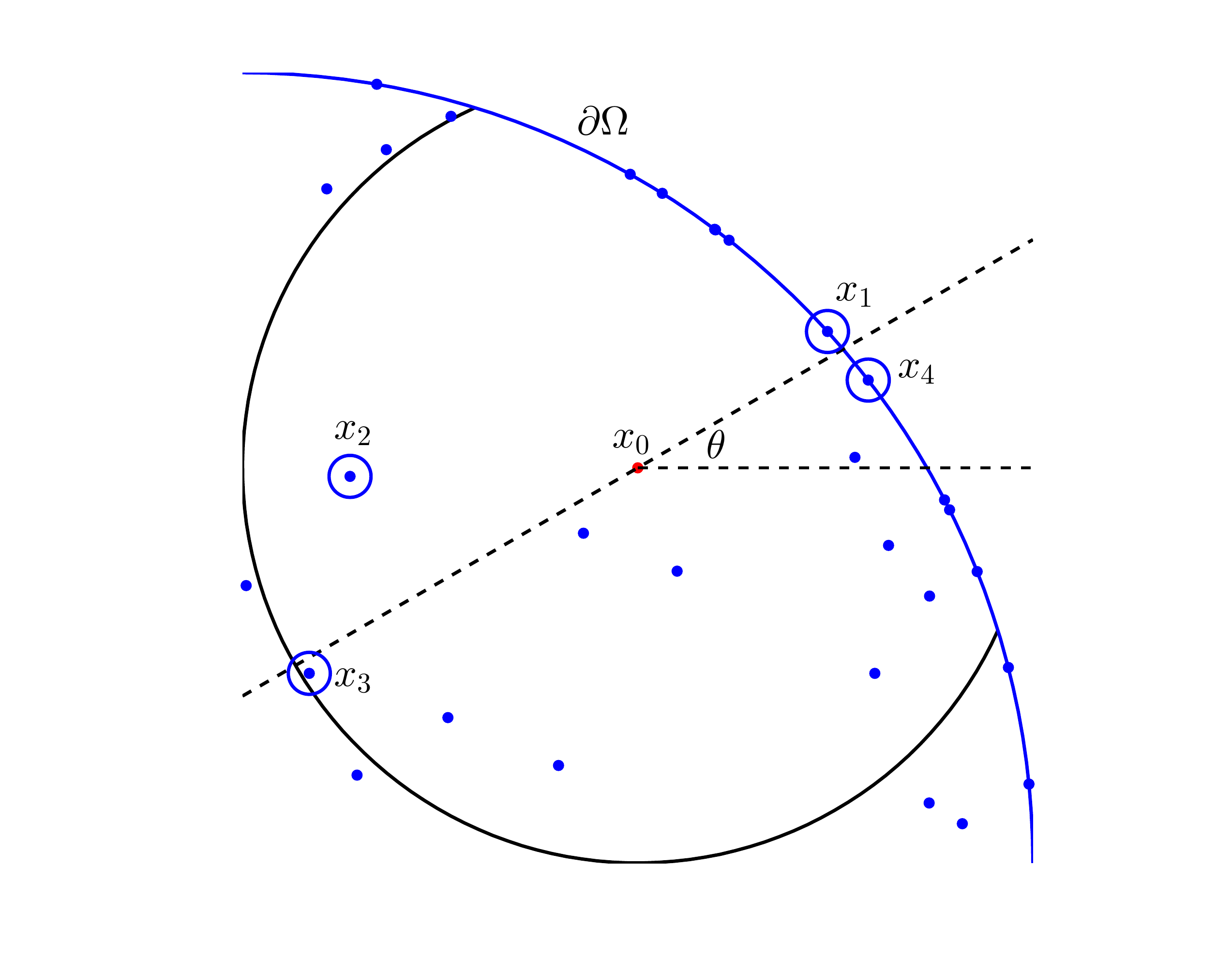}\label{fig:stencil2}}
\caption{A finite difference stencil chosen from a point cloud \subref{fig:stencil1}~in the interior and \subref{fig:stencil2}~near the boundary.}
\label{fig:stencil2D}
\end{figure}

Using these four neighbors, we seek an approximation of the form
\[ -u_{\bm{\nu}\bm{\nu}}(\xvec_0) \approx -\Dt_{\bm{\nu}\bm{\nu}}^hu(\xvec_0) = -\sum\limits_{j=1}^4 a_j(u(\xvec_j)-u(\xvec_0)). \]
Here monotonicity requires that each of the $a_j \geq 0$.  In two dimensions, it is possible to find an explicit form of the coefficients $a_j$ that yields a consistent, monotone approximation under Hypothesis~\ref{hyp:grid}.

We remark that the discretization error of the resulting scheme depends on two parameters: the effective spatial resolution $\epsilon$ (the maximum distance between $\xvec_0$ and the points $\xvec_j$ used in the finite difference stencil) and the angular resolution $d\phi$ (the maximum angle between the axis aligned with $\bm{\nu}$ and the vector $\xvec_j-\xvec_0$).  These components of the error are not independent of each other, and we find that the angular resolution is bounded by $d\phi = \max\{\bO(h/\epsilon),\bO(h_B/\delta)\}$.  See Figure~\ref{fig:MeshfreeAngular}.

\begin{figure}
\centering
\subfigure[]{\includegraphics[width=0.45\textwidth]{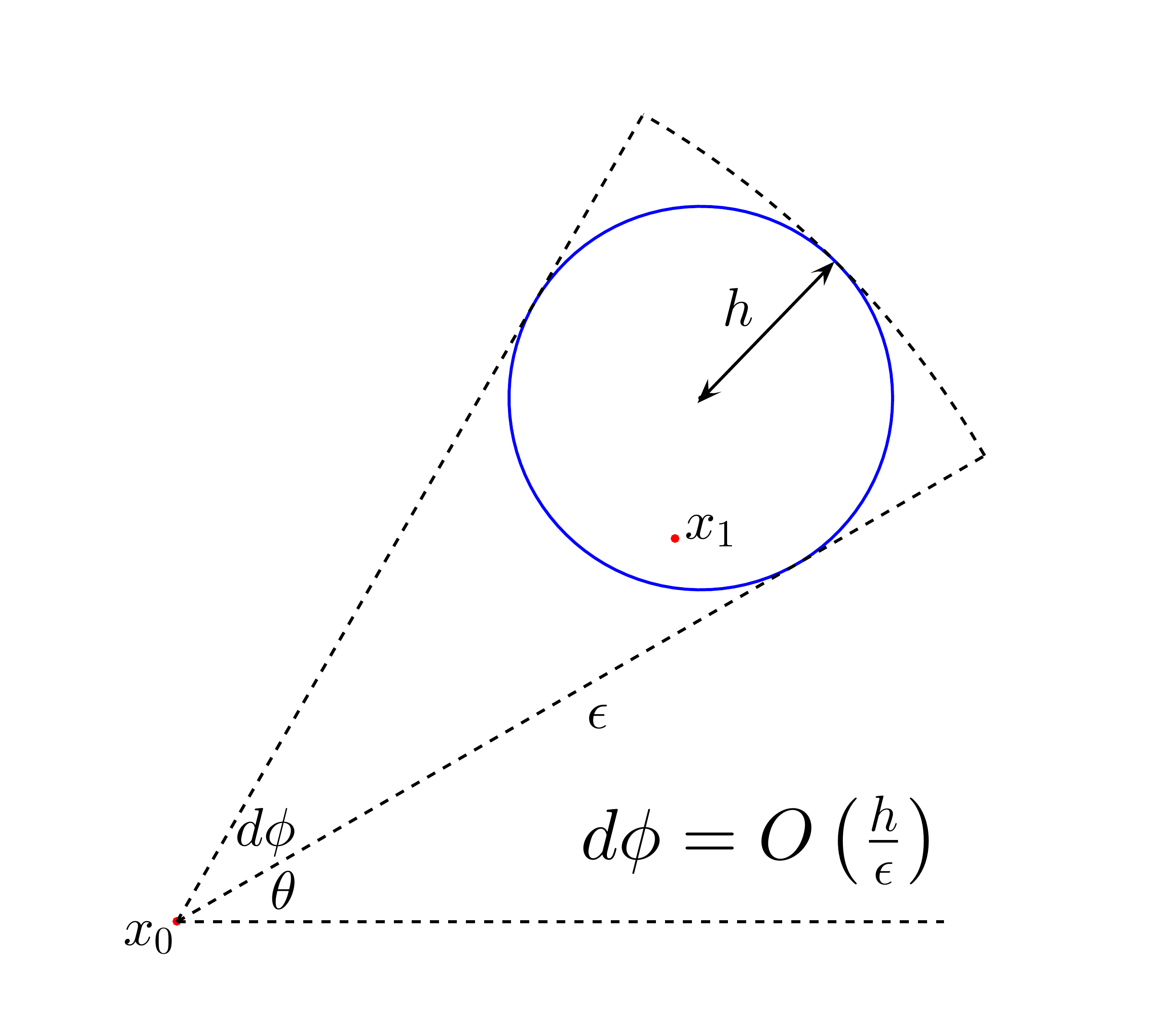}}\label{fig:MeshfreeExistence}
\subfigure[]{\includegraphics[width=0.45\textwidth]{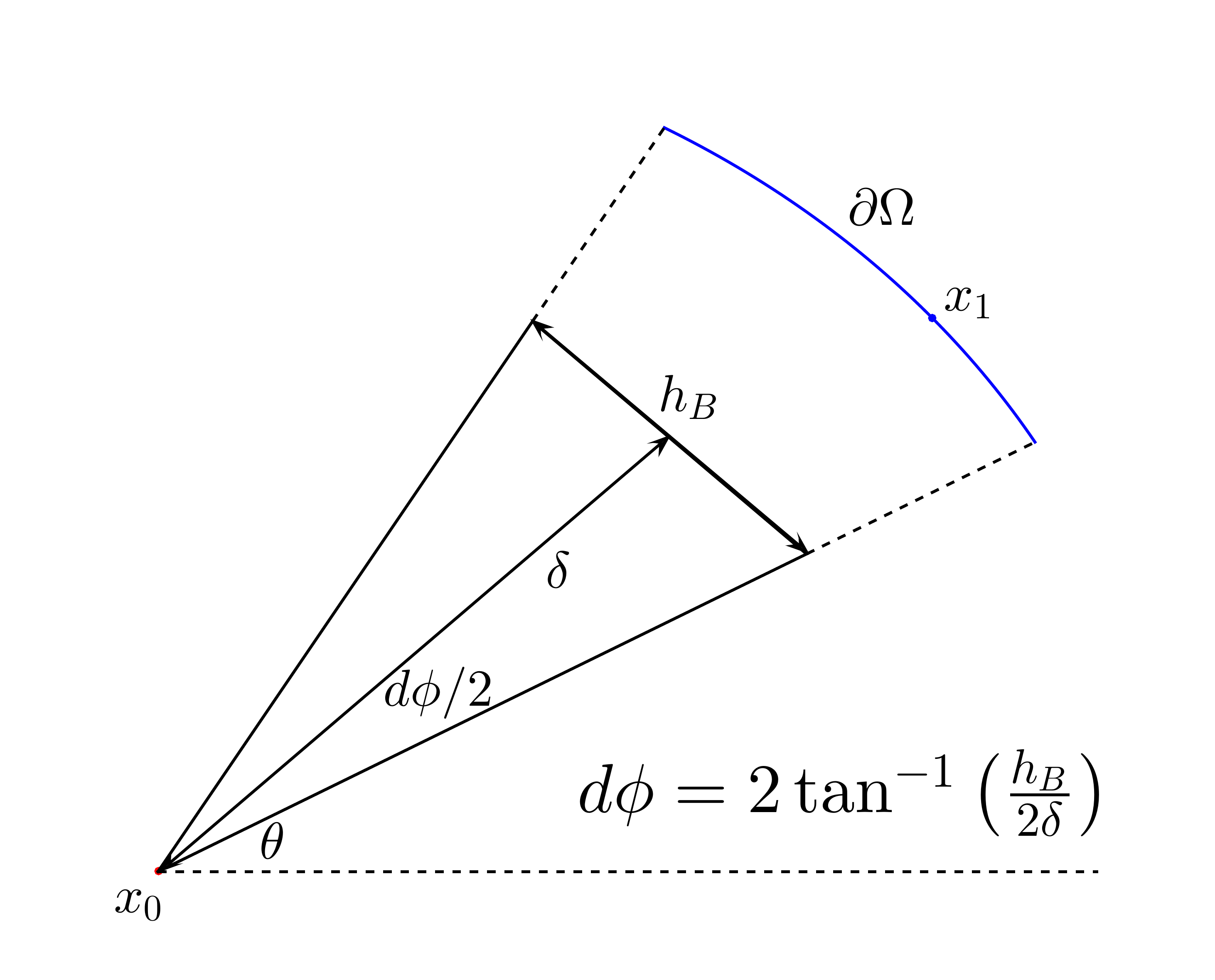}}\label{fig:MeshfreeBoundaryExist}
\caption{The angular resolution of a generalized finite difference stencil.}
\label{fig:MeshfreeAngular}
\end{figure}

Once any second directional derivative can be approximated, it is easy to substitute these into nonlinear operators of the form~\eqref{eq:PDE1}.  Functions of the eigenvalues of the Hessian~\eqref{eq:PDE2} are also easily approximated in two dimensions via the Rayleigh quotient characterization,
\bq\label{eq:eigs2D} \lambda_1(D^2u) = \min\limits_{\abs{\bm{\nu}}=1}u_{\bm{\nu}\bm{\nu}}, \quad \lambda_2(D^2u) = \max\limits_{\abs{\bm{\nu}}=1}u_{\bm{\nu}\bm{\nu}}. \eq


\section{Discretization}\label{sec:numerics}
\subsection{Construction of the grid}\label{sec:grid}
	In order to build a grid or point cloud on which monotone schemes can be constructed efficiently for general domains, we seek to extend the framework used in~\cite{FroeseMeshfreeEigs} and summarized in \autoref{sec:genFD}. 
	Structured grids provide certain advantages in building the stencils quickly, but as in two dimensions, the boundary needs to be more resolved than the interior and it is necessary to preserve an appropriate gap $\delta$ between interior and boundary points. 
	In two dimensions, where the boundary is a one-dimensional curve, this is fairly straightforward. 
	However, it is much more difficult to find an optimal sampling of boundary points in three dimensions.
	
	We begin by identifying interior points. 
	Our strategy is to begin with a uniform discretization of a cube $C$ covering the domain ($\Omega\subset C$), then reduce to only the interior points. 
	Denote the grid by $\G$. Define $\xvec_{ijk}$, $i,j,k = 0, \ldots, n$ to be the nodes of the discretized cube $C$ and let $h$ be the space between adjacent nodes.

Next we define the signed-distance function to the boundary of the domain $\Omega$, 
	$$G(\xvec) = \begin{cases}
	\dist(\xvec,\partial X) & \xvec \notin X \\
	-\dist(\xvec,\partial X) & \xvec \in X \\
	0 & \xvec \in \partial X.
	\end{cases}$$
	As in two dimensions, we will require that there be some separation $\delta$ between the interior and the boundary in order to consistently resolve directional derivatives near the boundary. 
	Thus, the interior points in $\G$ are chosen to be
	$$\xvec_{ijk} \in C \text{ s.t. } G(\xvec_{ijk}) + \delta<0.$$
	 This ensures that there is a distance of at least $\delta$ between the boundary and any interior point. A two-dimensional visualization of this process is shown in Figure~\ref{fig:domain}.
	 
	Note that although we start with the discretization of a cube, this restriction can be applied to arbitrarily complicated three-dimensional regions.

\begin{figure}[htp]
\centering
\subfigure[]{
\includegraphics[width=0.45\textwidth]{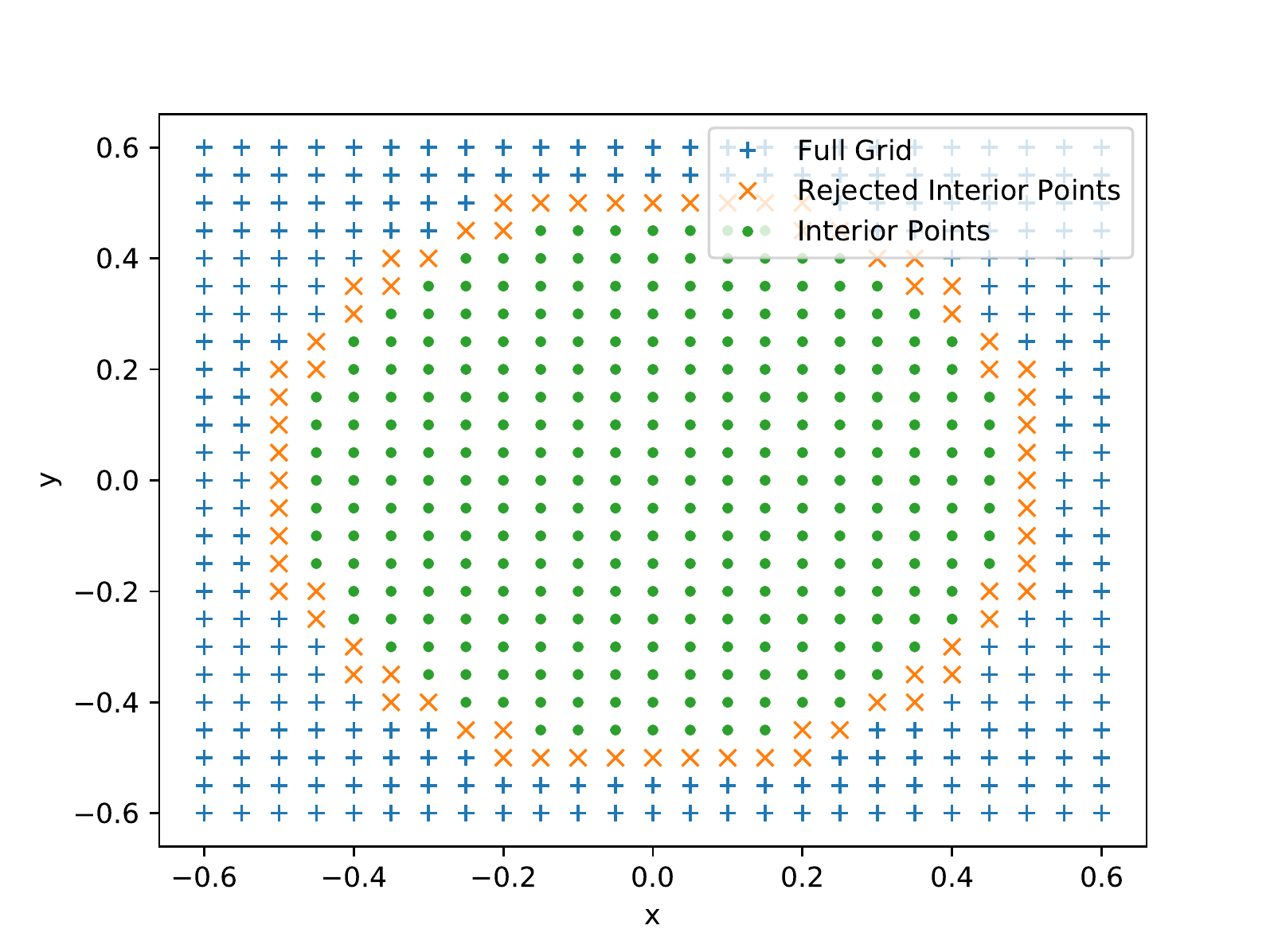}\label{fig:domain}}
\subfigure[]{
\includegraphics[width=0.45\textwidth]{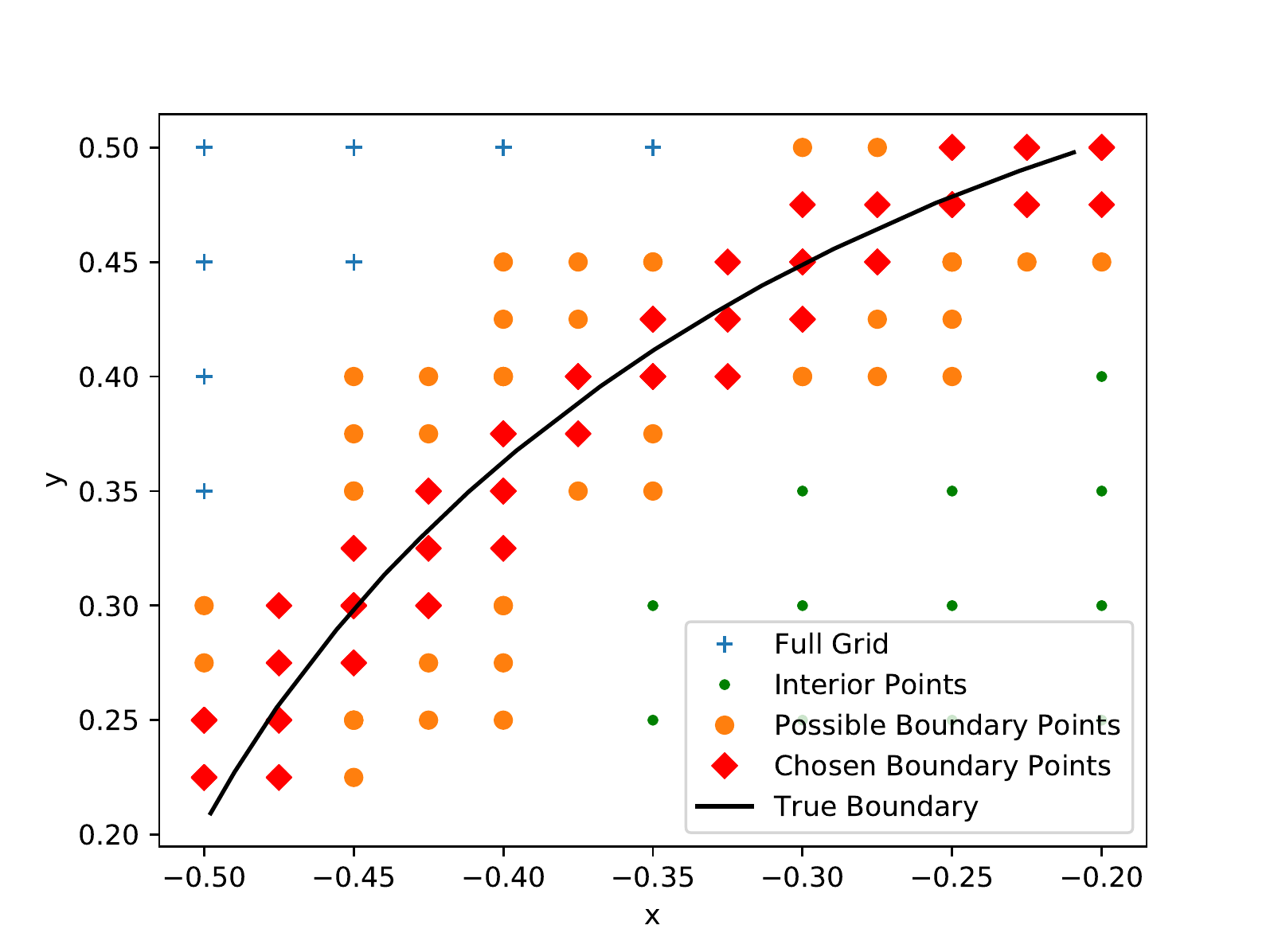}\label{fig:boundary}}
\caption{A two-dimensional visualization of the construction of the grid. \subref{fig:domain}~Candidate interior points, with points too close to the boundary rejected. \subref{fig:boundary}~Candidate boundary points, with selected boundary points projected onto the true boundary.}
\label{fig:domainConstruction}
\end{figure}

	
	Next we describe the discretization of the boundary, which must have an effective resolution $h_B \ll \delta \leq \bO(h)$ in order to produce consistent schemes near the boundary. 
	
At each candidate interior point near the boundary, we focus on the following small cubes  
	\bq\label{eq:smallCube} C_{ijk} =[x_i,x_{i+1}]\times[y_j,y_{j+1}]\times[z_k,z_{k+1}],\eq
which we define for any $i,j,k$ such that $C_{ijk} \cap \partial \Omega$ is non-empty.  To identify these, we seek any such cube such that at least one corner $\xvec_{-} \in C_{ijk}$ satisfies $G(\xvec_{-})<0$ and one corner $\xvec_{+} \in C_{ijk}$ satisfies $G(\xvec_{+})>0$. 

	Then boundary points are added to the point cloud by further discretizing these boundary cubes and using the projection of points sufficiently close to the boundary of the domain. 
	Let $C_{ijk}$ be a boundary cube. 
	We introduce the discretization 
	\bq\label{eq:bdyCubes}D_{ijk} = \left\{(x_i+\tilde{i}h_B,y_j+\tilde{j}h_B,z_k+\tilde{k}h_B)\text{ s.t. } 0 \leq \tilde{i},\tilde{j},\tilde{k} \leq n_B\right\}\eq
where the boundary resolution $h_B = \bO(h/n_B)$.
	
As candidates for boundary points, we select any points $\xvec\in D_{ijk}$ such that  $G(\xvec) < \frac{h_B}{2}$.  We then project each of these candidates onto the true boundary and include the results $\text{Proj}_{\partial\Omega}(\xvec)$ in our point cloud $\G$.
	A two-dimensional visualization of this process is given in Figure~\ref{fig:boundary}.
	
	We emphasize again that Hypothesis~\ref{hyp:grid} requires this procedure to lead to an over-resolution of the boundary ($h_B\ll h$), which is necessary (as in 2D) in order to preserve both consistency and monotonicity up to the boundary.  There are $\Or(n^2)$ boundary cubes, each of which contains $\bO(n_B^3)$ points.  Of these, we select $\bO(n_B^2)$  to project onto the boundary. This leaves us with a total of $\Or(n^2n_B^2)$ boundary points in $\G$, as compared with $\bO(n^2)$ boundary points in a traditional three-dimensional Cartesian grid.

\subsection{Approximation of second derivatives}
Next we describe a process for constructing a (negative) monotone approximation of the second directional derivative $u_{\bm{\nu}\bm{\nu}}(\xvec_0)$.  	We look for schemes of the form 
	\begin{align}\label{eq:DirDeriv} u_{\bm{\nu}\bm{\nu}}(\xvec_0) \approx \Dt_{\bm{\nu}\bm{\nu}}u_0 = \sum_{j=1}^m a_j(u(\xvec_j)-u(\xvec_0))\end{align} 
	where each $\xvec_j \in \G \cap B(\xvec_0,\epsilon)$ is a nearby grid point and all $a_j \geq 0$ for monotonicity.

In the simplest setting, where the direction $\bm{\nu}$ is grid aligned (Figure~\ref{fig:aligned}), we can simply use a standard centered difference discretization.  That is, suppose that $\bm{\nu}\in\Zf^3$ with $\abs{\bm{\nu}}h < \epsilon$ and $\xvec_0 \pm \bm{\nu} h \in \G$.  Then we define
\bq\label{eq:d2center} \Dt_{\bm{\nu}\bm{\nu}}u_0 = \frac{u(\xvec_0+\bm{\nu} h) + u(\xvec_0 - \bm{\nu} h)-2u(\xvec_0)}{\abs{\bm{\nu}}^2h^2}, \eq
which satisfies all the requirements outlined above.

This is the approach taken by traditional wide stencil schemes.  However, there are some clear situations where this simple scheme is not available: (1) if $\bm{\nu}$ is not grid-aligned, (2) if $\bm{\nu}$ is grid-aligned but requires a stencil wider than our chosen search radius $\epsilon$, and (3) at points $\xvec_0$ near the boundary where one or both of $\xvec_0 \pm \bm{\nu} h$ can lie outside the domain.

	\begin{figure}[htbp]
	\subfigure{
	\includegraphics[trim=2cm 1.6cm 2cm 2.4cm,clip=true,width=2.25in]{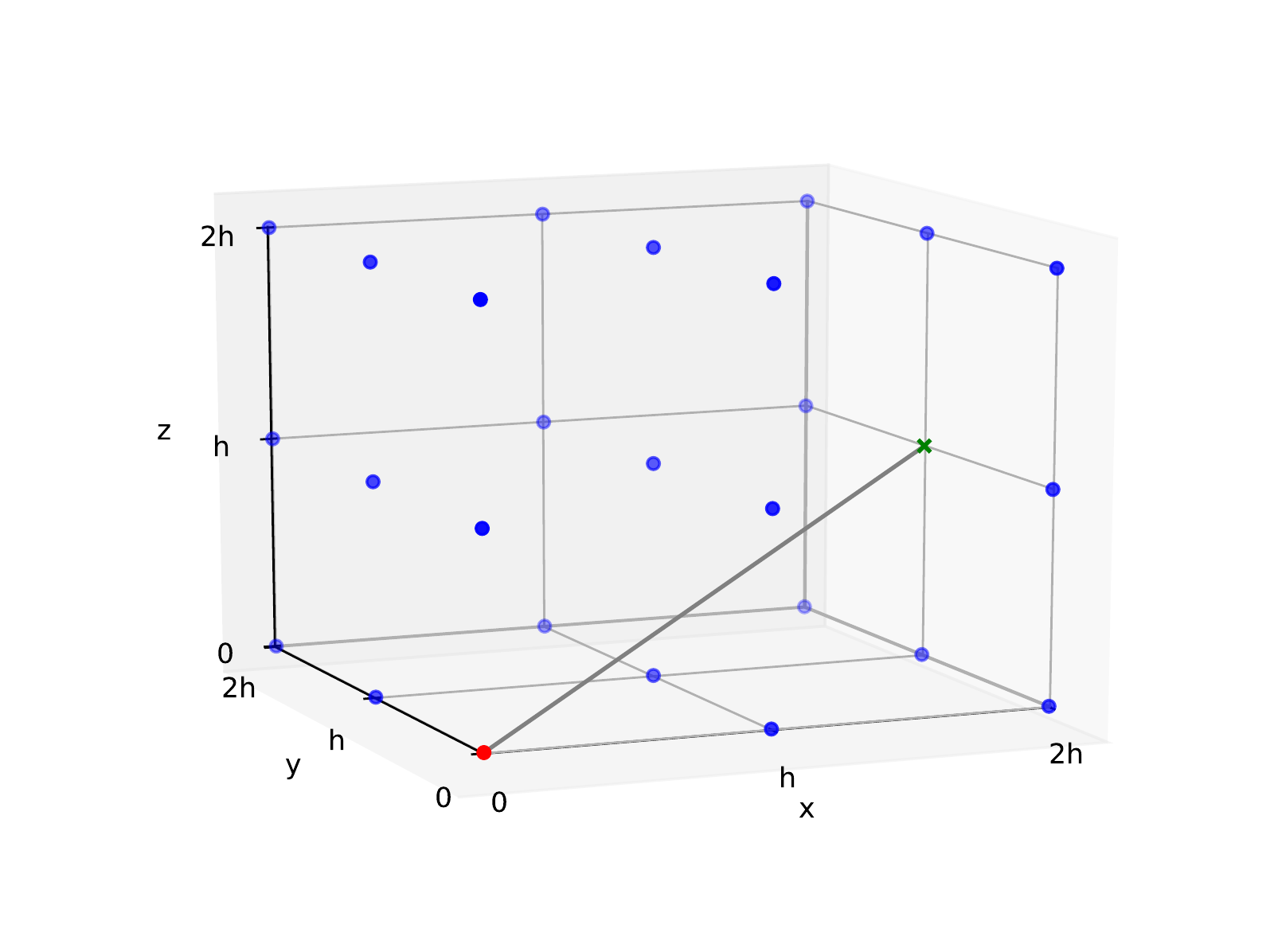}\label{fig:aligned}}\subfigure{\includegraphics[trim=2cm 1.6cm 2cm 2.4cm,clip=true,width=2.25in]{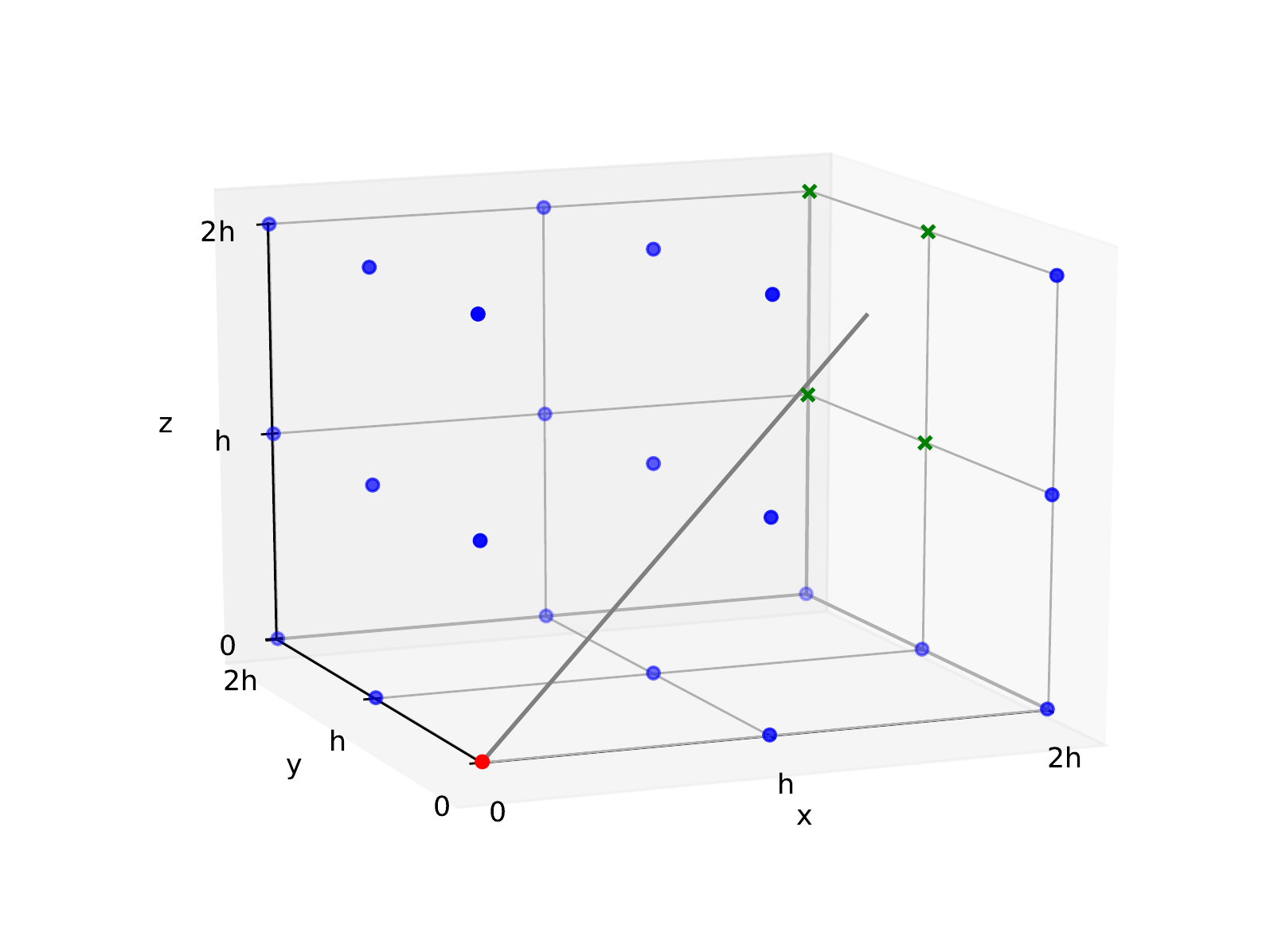}\label{fig:nonaligned}}
	\caption{Examples of \subref{fig:aligned}~a perfectly aligned neighbor along the vector $\xvec_0 + \bm{\nu} h$ and \subref{fig:nonaligned}~four non-aligned neighbors along this direction.}
	\label{fig:neighbors3D}
	\end{figure}

\subsubsection{Generalized finite difference schemes}\label{sec:d2gen}
At points where the simple centered scheme is not available, we will follow the approach of~\cite{FroeseMeshfreeEigs} and develop generalized finite difference schemes.  For clarity of exposition, we begin by considering the approximation of $u_{xx}(\xvec_0)$.  

From the reference point $\xvec_0$ (which is treated as the origin), we define an orthogonal coordinate system using the standard Cartesian coordinate axes $\hat{x}$, $\hat{y}$, $\hat{z}$.  Let $O_i$, $i=1, \ldots, 8$ denote the eight octants defined by these axes.  We can also use this coordinate frame to define spherical coordinates $(r_j,\theta_j,\phi_j)$ corresponding to any grid point $\xvec_j \in \G$.

In order to derive an appropriate scheme for $u_{xx}$ we rely on Taylor expansion.  That is, we seek a scheme of the form
\bq\label{eq:taylor}
\begin{aligned}
\Dt_{xx}u(\xvec_0) &= \sum\limits_{j=1}^m a_j(u(\xvec_j)-u(\xvec_0))\\
  &= \sum\limits_{j=1}^m a_j \left[u_x(\xvec_0)(x_j-x_0) + u_y(\xvec_0)(y_j-y_0) + u_z(\xvec_0)(z_j-z_0) \right.\\
		 &\phantom{=====}+ u_{xx}(\xvec_0)\frac{(x_j-x_0)^2}{2} + u_{yy}(\xvec_0)\frac{(y_j-y_0)^2}{2} + u_{zz}(\xvec_0)\frac{(z_j-z_0)^2}{2} \\
		 &\phantom{=====}+u_{xy}(\xvec_0)(x_j-x_0)(y_j-y_0) + u_{xz}(\xvec_0)(x_j-x_0)(z_j-z_0) \\
		 &\phantom{=====}+ \left. u_{yz}(\xvec_0)(y_j-y_0)(z_j-z_0)\right] + h.o.t.
\end{aligned}
\eq

To simplify this expansion, we want to choose neighbors $\xvec_j$ that are well-aligned with the $x$-axis so that $\abs{y-y_0}, \abs{z-z_0} = o(\abs{x-x_0})$.  We recall that in a perfectly grid-aligned scheme, we would have $y-y_0 = z-z_0 = 0$.  Inspired by the two-dimensional approach of~\cite{FroeseMeshfreeEigs}, we seek one point in each octant $O_j$ such that $\xvec_j-\xvec_0$ is as well-aligned as possible with the direction $\hat{x}$.  That is, we define
\bq\label{eq:nbs}
\xvec_j = \argmin\left\{\theta^2 + (\phi-\pi/2)^2 \mid \xvec \in O_j \cap \G \cap B(\xvec_0,\epsilon)\right\}.
\eq
See Figure~\ref{fig:nonaligned} for the selection of four neighbors well-aligned with the vector $\bm{\nu}$; the four neighbors along the direction $-\bm{\nu}$ are chosen similarly.

Notice that with slight modification, the spherical coordinates $\theta_j$ and $\phi_j-\pi/2$ of these neighbors now play the role of the angular resolution $d\phi$ introduced in \autoref{sec:genFD}.  In particular, we expect that $\theta_j, \phi_j-\pi/2 = \bO(h/\epsilon + h_B/\delta)$ as in two dimensions.  As desired, we obtain the relationship
\[ \abs{y_j-y_0}, \abs{z_j-z_0} \leq \bO(\epsilon\sin \theta_j \sin \phi_j + \epsilon\cos \phi_j) = \bO(\epsilon d\phi) \ll \bO(\epsilon) = \abs{x_j-x_0}. \]

Returning to the Taylor expansion in~\eqref{eq:taylor}, we now neglect all terms that are $o(\epsilon^2)$.  This leaves us with
\bq\label{eq:taylorSimple}
\begin{aligned}
\Dt_{xx}u(\xvec_0) 
  &= \sum\limits_{j=1}^8 a_j \left[u_x(\xvec_0)(x_j-x_0) + u_y(\xvec_0)(y_j-y_0)  \phantom{\frac{x^2}{2}} \right.\\
		 &\phantom{=====}\left.+ u_z(\xvec_0)(z_j-z_0)+ u_{xx}(\xvec_0)\frac{(x_j-x_0)^2}{2}\right] + h.o.t.  \\
\end{aligned}
\eq

Consistency of the approximation, combined with the monotonicity requirement, leaves us with the following system of equations for the coefficients $a_j$:
		\bq
		\begin{cases}
		\sum\limits_{j=1}^8 a_j (x_j-x_0) = 0\\
		\sum\limits_{j=1}^8 a_j (y_j-y_0) = 0\\
		\sum\limits_{j=1}^8 a_j (z_j-z_0) = 0\\
		\sum\limits_{j=1}^8 a_j \frac{(x_j-x_0)^2}{2}  = 1\\
		a_j \geq 0.
		\end{cases}
		\label{eq:FarkSyst}
		\eq
		
In two dimensions, an explicit solution of the system resulting from this procedure could be obtained.  This is not straightforward in three dimensions, and explicit formulas are often computationally intractable in practice.  Instead, we solve the system~\eqref{eq:FarkSyst} numerically through a least squares procedure.  That is, we notice that this system has the form
\[
\begin{cases}
M\mathbf{a} = \mathbf{b}\\
\mathbf{a} \geq 0
\end{cases}
\]
where $M$ is a $4\times 8$ matrix.  Thus, we can easily obtain a solution by solving the low-dimensional least squares problem
\bq\label{eq:leastSquares}
\begin{cases}
\text{minimize } &\frac{1}{2} \|M\mathbf{a}-\mathbf{b}\|_2^2 \\
\text{subject to } &\mathbf{a} \geq 0.
\end{cases}
\eq

We remark that this procedure is easily adapted to the approximation of more general second directional derivatives $u_{\bm{\nu}\bm{\nu}}(\xvec_0)$. To accomplish this, we introduce any two vectors $\bm{\nu}_2, \bm{\nu}_3$ such that $\bm{\nu}, \bm{\nu}_2, \bm{\nu}_3$ form an orthonormal set.  We use these as our coordinate axes, centered at the point $\xvec_0$, and introduce the change of coordinates
		$$
		\begin{cases}
		\bar{x}_j = (\xvec_j-\xvec_0)\cdot \bm{\nu},\\
		\bar{y}_j = (\xvec_j-\xvec_0)\cdot \bm{\nu}_2,\\
		\bar{z}_j = (\xvec_j-\xvec_0)\cdot \bm{\nu}_3.
		\end{cases}
		$$
The coefficients $a_j$ in the approximation $\Dt_{\bm{\nu}\bm{\nu}}(\xvec_0)$ in~\eqref{eq:DirDeriv} are then obtained by solving the system~\eqref{eq:FarkSyst} using these new coordinates in place of $x_j, y_j, z_j$.

	\subsubsection{Existence of a Positive Solution}
Our procedure for generating consistent, monotone generalized finite difference schemes in 3D hinges on finding a solution of~\eqref{eq:FarkSyst} via a least squares procedure.  However, it is by no means obvious that a solution satisfying the positivity requirement ($a_j \geq 0$) actually exists in general. Fortunately, this is guaranteed by our careful choice of neighboring points $\xvec_j$ lying in different octants.

The proof of this relies on Farkas' Lemma~\cite{FarkasLecture}.
		\begin{lemma}[Farkas' Lemma]\label{lem:farkas}
		Let $M \in \R^{m\x n}$ and $\mathbf{b} \in \R^{m \x 1}$. Then exactly one of the following two conditions holds:\\
		\begin{itemize}
		\item There exists $ \mathbf{a} \in \R^{n \x 1}$ such that $M\mathbf{a}=\mathbf{b}$ and $\mathbf{a} \geq 0$; 
		\item There exists $ \mathbf{y} \in \R^{m \x 1}$ such that $M^T \mathbf{y} \geq 0,  \mathbf{y}^T \mathbf{b} < 0$.
		\end{itemize}
		\end{lemma}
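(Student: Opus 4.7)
The plan is to prove Farkas' Lemma in two parts: first show the two alternatives are mutually exclusive, then use a separating hyperplane argument to establish that at least one of them must hold.

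For mutual exclusivity, I would suppose both conditions held simultaneously and derive a contradiction. If $M\mathbf{a} = \mathbf{b}$ with $\mathbf{a} \geq 0$ and $M^T\mathbf{y} \geq 0$, then
\[
\mathbf{y}^T \mathbf{b} \;=\; \mathbf{y}^T M \mathbf{a} \;=\; (M^T\mathbf{y})^T \mathbf{a} \;\geq\; 0,
\]
since it is the inner product of two entrywise non-negative vectors. This contradicts $\mathbf{y}^T\mathbf{b} < 0$, so at most one of the alternatives can hold.

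For the converse, I would assume the first alternative fails and construct a witness $\mathbf{y}$ for the second. Let $K = \{M\mathbf{a} : \mathbf{a} \geq 0\} \subset \R^m$, the convex cone generated by the columns of $M$. By hypothesis $\mathbf{b} \notin K$, so provided $K$ is closed, the separating hyperplane theorem furnishes a $\mathbf{y} \in \R^m$ with $\mathbf{y}^T \mathbf{b} < \mathbf{y}^T\mathbf{x}$ for every $\mathbf{x} \in K$. Taking $\mathbf{x} = 0 \in K$ yields $\mathbf{y}^T \mathbf{b} < 0$, and the cone structure forces $\mathbf{y}^T\mathbf{x} \geq 0$ throughout $K$ (otherwise the scaling $\mathbf{x}\mapsto t\mathbf{x}$ drives $\mathbf{y}^T\mathbf{x}$ to $-\infty$ and destroys the separation). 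Specializing $\mathbf{x}$ to each column of $M$ produces $M^T\mathbf{y} \geq 0$, which is precisely the second alternative.

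The main obstacle is proving $K$ is closed in $\R^m$, since arbitrary convex cones need not be. My plan is to invoke a Carath\'eodory-type argument: every $\mathbf{x}\in K$ can be expressed as a non-negative combination of a \emph{linearly independent} subset of columns of $M$. Consequently $K = \bigcup_J K_J$, where $J$ ranges over the finitely many linearly independent column subsets and each $K_J$ is the image of the closed orthant $\R_{\geq 0}^{|J|}$ under the injective linear map determined by the columns indexed by $J$. Each such $K_J$ is closed (it is the image of a closed set under a linear map with continuous left inverse on its range), and a finite union of closed sets is closed. With $K$ closed, the separation argument above completes the proof.
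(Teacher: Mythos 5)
Your proof is correct. Note, however, that the paper itself does not prove this lemma at all: Farkas' Lemma is quoted as a known result with a citation and is only \emph{used} (in the proof of Lemma~\ref{lem:positiveSolution}), so there is no in-paper argument to compare against. What you supply is the classical self-contained proof: mutual exclusivity via $\mathbf{y}^T\mathbf{b} = (M^T\mathbf{y})^T\mathbf{a} \geq 0$, and the existence half via strict separation of $\mathbf{b}$ from the finitely generated cone $K = \{M\mathbf{a} : \mathbf{a}\geq 0\}$, with $\mathbf{y}^T\mathbf{b}<0$ from $0\in K$ and $M^T\mathbf{y}\geq 0$ from the scaling argument on the cone. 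You also correctly identify and close the one gap that casual treatments skip, namely the closedness of $K$: the conic Carath\'eodory decomposition of $K$ into finitely many cones $K_J$ generated by linearly independent column subsets, each closed as the homeomorphic image of a closed orthant under an injective linear map onto a closed subspace, is a standard and valid way to do this. The only caveat is that the decomposition step ``every $\mathbf{x}\in K$ is a non-negative combination of a linearly independent subset of columns'' itself deserves its one-line justification (remove a dependency relation by shifting coefficients until one vanishes while the rest stay non-negative), but this is routine; as written, your plan is sound and, if anything, more complete than what the paper provides.
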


This allows us to prove the existence of a solution to the scheme~\eqref{eq:FarkSyst}, which immediately yields existence of a consistent and monotone scheme for $u_{\bm{\nu}\bm{\nu}}(\xvec_0)$.
		\begin{lemma}[Existence of positive solution]\label{lem:positiveSolution}
		A positive solution to the system of equations~\eqref{eq:FarkSyst} exists if the eight neighbors $\xvec_j$ lie in different octants as required by~\eqref{eq:nbs}. 
		\end{lemma}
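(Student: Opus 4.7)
The plan is to establish the contrapositive using Farkas' Lemma: I will assume that no non-negative solution to $M\mathbf{a} = \mathbf{b}$ exists with $\mathbf{b} = (0,0,0,1)^T$ and derive a contradiction. By Farkas' Lemma there must then exist $\mathbf{y} = (\alpha,\beta,\gamma,\delta) \in \R^4$ satisfying $M^T\mathbf{y} \geq 0$ and $\mathbf{y}^T\mathbf{b} = \delta < 0$. The first condition unpacks into the eight pointwise inequalities
\[
\alpha(x_j-x_0) + \beta(y_j-y_0) + \gamma(z_j-z_0) + \tfrac{\delta}{2}(x_j-x_0)^2 \geq 0, \qquad j = 1,\ldots,8.
\]
The strategy is to form a non-negative linear combination of these eight inequalities that annihilates the $\alpha$, $\beta$, and $\gamma$ contributions simultaneously, leaving only a strictly positive multiple of $\delta$ on the left-hand side, which will contradict $\delta < 0$.

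The key geometric step exploits the octant structure enforced by~\eqref{eq:nbs}. Among the eight chosen neighbors, exactly four satisfy $x_j > x_0$, one per octant with positive first coordinate. Projecting these four points onto the $(y,z)$-plane therefore places exactly one of them strictly in each of the four open quadrants. A short winding argument (or the direct observation that the quadrilateral joining them in cyclic order encloses the origin) shows that the origin lies in the interior of their convex hull. Consequently there exist non-negative weights $w_j^+$ summing to one such that $\sum w_j^+(y_j-y_0) = \sum w_j^+(z_j-z_0) = 0$. The same argument applied to the four neighbors with $x_j < x_0$ yields analogous weights $w_j^-$.

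Writing $X^\pm = \sum w_j^\pm(x_j-x_0)$ (so $X^+ > 0 > X^-$) and $Q^\pm = \sum w_j^\pm(x_j-x_0)^2 > 0$, I combine the eight inequalities with the non-negative multipliers $(-X^-)w_j^+$ on the positive-$x$ side and $X^+w_j^-$ on the negative-$x$ side. The $\beta$ and $\gamma$ coefficients vanish by construction, while the $\alpha$ coefficient collapses to $(-X^-)X^+ + X^+X^- = 0$. What remains is the scalar inequality
\[
\tfrac{\delta}{2}\bigl[(-X^-)Q^+ + X^+Q^-\bigr] \geq 0,
\]
in which the bracketed factor is strictly positive, forcing $\delta \geq 0$ and completing the contradiction.

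The main obstacle is the convex-hull claim used in the second paragraph: one must verify rigorously that four points placed strictly in the four open quadrants of $\R^2$ always enclose the origin in their convex hull. This is essentially automatic in two dimensions via the winding/quadrilateral argument, but it has no clean three-dimensional analogue for points spread across the eight octants of $\R^3$. That mismatch is precisely why the construction~\eqref{eq:nbs} splits the eight octants into two groups of four via the sign of $x_j - x_0$ and then reduces the obstruction to a two-dimensional enclosing problem on each side of the plane $x = x_0$.
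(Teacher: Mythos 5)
Your proof is correct, and although it starts from the same place as the paper---Farkas' Lemma applied to the system \eqref{eq:FarkSyst}---the way you refute the dual alternative is genuinely different. The paper argues by exhaustive sign analysis: given $y_4<0$, it runs through the eight possible sign patterns of $(y_1,y_2,y_3)$ and, for each pattern, points to the column of $M$ (the ``opposite'' octant) whose inner product with $\mathbf{y}$ is strictly negative. You instead construct a single nonnegative aggregation of the eight dual inequalities: you split the neighbors by the sign of $x_j-x_0$, use the fact that four points lying one in each open quadrant of the $(y,z)$-plane contain the origin in their convex hull to obtain barycentric weights annihilating the $\beta,\gamma$ terms, and then balance the two groups with the multipliers $-X^-$ and $X^+$ to annihilate the $\alpha$ term, leaving $\tfrac{\delta}{2}\bigl[(-X^-)Q^+ + X^+Q^-\bigr]\ge 0$ and hence $\delta\ge 0$, a contradiction. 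The convex-hull step you flagged is sound: if $0\notin\operatorname{conv}\{p_1,\dots,p_4\}$ there would be a direction $v\neq 0$ with $v\cdot p_i>0$ for all four points, which is impossible when the points occupy all four open quadrants (whatever the signs of the components of $v$, the point in the opposing quadrant gives $v\cdot p_i<0$); this also shows the origin is interior, though mere hull membership is all you use. The strict positivity of $X^+$, $-X^-$, and $Q^\pm$ rests on $x_j\neq x_0$ for every neighbor, which is the same strictness the paper implicitly invokes when it takes $c_{47}>0$. In comparison, the paper's case check is shorter and purely combinatorial, while your argument is more structural: it isolates exactly which geometric feature of the octant placement is being exploited and effectively exhibits the Farkas certificate, at the cost of the small two-dimensional convex-hull lemma.
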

		\begin{proof}
		Since each of the points $\xvec_j$ is chosen to lie in the octant $O_j$, we can assign a definite sign to each of the $x_j-x_0$, $y_j-y_0$, and $z_j-z_0$.  The system~\eqref{eq:FarkSyst} then takes the form
		\[
		\begin{cases}
		M\mathbf{a} = \mathbf{b}\\
		\mathbf{a} \geq 0
		\end{cases}
		\]
		where
		\begin{align}M = \begin{bmatrix}
		c_{11} & -c_{12} & -c_{13} &c_{14} &c_{15} &-c_{16} &-c_{17} &c_{18} \\
		c_{21} & c_{22} & -c_{23} &-c_{24} &c_{25} &c_{26} &-c_{27} &-c_{28} \\
		c_{31} & c_{32} & c_{33} &c_{34} &-c_{35} &-c_{36} &-c_{37} &-c_{38} \\
		c_{41} & c_{42} & c_{43} &c_{44} &c_{45} &c_{46} &c_{47} &c_{48}
		\end{bmatrix}, \quad \mathbf{b} = \begin{bmatrix} 0\\0\\0\\1\end{bmatrix},
		\end{align}
		and all of the $c_{ij} \geq 0$.
		
		Following Farkas' Lemma, suppose there exists some $\mathbf{y}$ such that $A^T \mathbf{y} \geq 0$ and $\mathbf{b}^T \mathbf{y} < 0$.  Notice that this second condition implies that
		$y_4 < 0.$
There are eight possible combinations of signs for the remaining components  $y_1, y_2,$ and $y_3$.

Consider, for example, the case where $y_1, y_2, y_3 \geq 0$.  Then we would have
\[ (M^T\mathbf{y})_7 = -c_{17}y_1-c_{27}y_2-c_{37}y_3+c_{47}y_4 < 0. \]
Similarly, we can verify that any other possible combination of signs in $y_1, y_2, y_3$ will require at least one component of $M^T\mathbf{y}$ to be negative. 

We conclude that there is no $\mathbf{y}$ such that both $M^T\mathbf{y} \geq 0$ and $\mathbf{y}^T\mathbf{b} < 0$.  By Farkas' Lemma, we infer the existence of a solution to~\eqref{eq:FarkSyst}.
		\end{proof}
		
We should also remark that this same strategy can be used using one perfectly aligned neighbor along the direction $\bm{\nu}$ (the setting of Figure~\ref{fig:aligned}) and four non-aligned neighbors along the direction $ - \bm{\nu}$ (the setting of Figure~\ref{fig:nonaligned}).  This situation can easily arise near the boundary, involving one perfectly aligned interior neighbor and four non-aligned boundary neighbors.  The proof is identical to that of the lemma above.
		
\begin{cor}[Existence of a consistent, monotone scheme]\label{cor:schemeExists}
Under the assumptions of Hypothesis~\ref{hyp:grid}, let $\bm{\nu}$ be any unit vector in $\R^3$.  Then the procedure described in \autoref{sec:d2gen} yields a consistent, monotone approximation of the negated second directional derivative $-u_{\bm{\nu}\bm{\nu}}(\xvec_0)$.
\end{cor}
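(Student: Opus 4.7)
The plan is to split the proof into two parts, establishing monotonicity and consistency separately, and then to invoke Hypothesis~\ref{hyp:grid} to make the angular/spatial resolutions cooperate.

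First I would dispatch monotonicity. By construction, the coefficients $a_j$ produced by the least squares problem~\eqref{eq:leastSquares} satisfy $a_j \geq 0$, so the scheme~\eqref{eq:DirDeriv} is monotone in $u(\xvec_j)-u(\xvec_0)$ by definition. What needs justifying is that the constrained least squares problem actually has a feasible point, so that the minimizer exactly solves $M\mathbf{a}=\mathbf{b}$ (and hence the scheme is also consistent to the appropriate order). This is exactly the content of Lemma~\ref{lem:positiveSolution}, applied in the rotated frame $(\bm{\nu},\bm{\nu}_2,\bm{\nu}_3)$. I would also remark that the same lemma, with an aligned neighbor along $\bm{\nu}$ or $-\bm{\nu}$ replacing one octant group as noted after Lemma~\ref{lem:positiveSolution}, covers the near-boundary configurations, and that the purely grid-aligned centered difference~\eqref{eq:d2center} handles the remaining case directly.

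Next I would verify consistency by plugging a smooth test function $\phi$ into $\Dt_{\bm{\nu}\bm{\nu}}$ and Taylor expanding about $\xvec_0$ in the rotated coordinates $\bar{x}_j = (\xvec_j-\xvec_0)\cdot\bm{\nu}$, $\bar{y}_j = (\xvec_j-\xvec_0)\cdot\bm{\nu}_2$, $\bar{z}_j = (\xvec_j-\xvec_0)\cdot\bm{\nu}_3$. The first four equations of~\eqref{eq:FarkSyst} are precisely the conditions that annihilate $\phi_{\bar{x}}, \phi_{\bar{y}}, \phi_{\bar{z}}$ and normalize the coefficient of $\phi_{\bar{x}\bar{x}} = \phi_{\bm{\nu}\bm{\nu}}$ to one. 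The remaining second-order contributions are controlled by bounding terms of the form $\sum_j a_j \bar{y}_j^2$, $\sum_j a_j \bar{z}_j^2$, $\sum_j a_j |\bar{x}_j\bar{y}_j|$, $\sum_j a_j |\bar{x}_j \bar{z}_j|$, $\sum_j a_j |\bar{y}_j \bar{z}_j|$, together with the third-order remainders $\sum_j a_j |\xvec_j-\xvec_0|^3$.

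The main obstacle is getting a quantitative handle on $\sum_j a_j$ so that these remainders vanish as $h\to 0$. The route I would take is to use the octant-selection rule~\eqref{eq:nbs}, which forces $|\bar{y}_j|,|\bar{z}_j| = \bO(\epsilon\, d\phi)$ with $d\phi = \bO(h/\epsilon + h_B/\delta)$, while $|\bar{x}_j|$ is of order $\epsilon\cos(d\phi)$, hence comparable to $\epsilon$. Normalizing via $\sum_j a_j \bar{x}_j^2/2 = 1$ then gives $\sum_j a_j = \bO(1/\epsilon^2)$, from which I can bound
\[
\sum_j a_j \bar{y}_j^2 \leq \max_j \frac{\bar{y}_j^2}{\bar{x}_j^2}\,\sum_j a_j \bar{x}_j^2 = \bO\bigl((d\phi)^2\bigr),
\]
and similarly for the mixed and $\bar{z}$-terms. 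The third-order remainder is $\bO(\epsilon^3) \cdot \bO(1/\epsilon^2) = \bO(\epsilon)$. By Hypothesis~\ref{hyp:grid}, both $\epsilon$ and $d\phi$ tend to zero, so $\Dt_{\bm{\nu}\bm{\nu}}\phi(\xvec_0) \to \phi_{\bm{\nu}\bm{\nu}}(\xvec_0)$, giving consistency in the sense of Definition~\ref{def:consistency}. Finally I would note that obtaining the upper bound $\sum_j a_j = \bO(1/\epsilon^2)$ is the most delicate step, since it requires ruling out degenerate least squares solutions where weights concentrate on nearly collinear neighbors; here I would use the octant separation of the chosen stencil points together with the strict positivity of the dot products $\bar{x}_j^2/|\xvec_j-\xvec_0|^2 = \cos^2(d\phi)$ bounded away from zero as $h\to 0$.
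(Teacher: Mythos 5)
Your overall structure---monotonicity from the positivity constraint $a_j\geq 0$, exact feasibility of \eqref{eq:FarkSyst} via Lemma~\ref{lem:positiveSolution} (including the aligned-neighbor variant and the centered difference \eqref{eq:d2center} as special cases), and consistency by Taylor expansion in the rotated frame with the cross terms controlled by the angular resolution $d\phi=\bO(h/\epsilon+h_B/\delta)$---is exactly the route the paper takes, where the corollary is treated as an immediate consequence of Lemma~\ref{lem:positiveSolution} together with the expansion \eqref{eq:taylor}--\eqref{eq:taylorSimple}.

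There is, however, a genuine flaw in the step you yourself flag as delicate: the claim $\sum_j a_j=\bO(1/\epsilon^2)$. From the normalization $\sum_j a_j\bar{x}_j^2=2$ you can only deduce $\sum_j a_j=\bO(1/\min_j \bar{x}_j^2)$, and the selection rule \eqref{eq:nbs} minimizes the \emph{angle}, not the distance: it only guarantees $\xvec_j\in B(\xvec_0,\epsilon)$. In particular, near the boundary the best-aligned neighbor may be a boundary point at distance only $\approx\delta\ll\epsilon$ from $\xvec_0$, so $|\bar{x}_j|$ can be far smaller than $\epsilon$ and $\sum_j a_j$ as large as $\bO(1/\delta^2)$; your third-order estimate $\bO(\epsilon^3)\cdot\bO(1/\epsilon^2)$ would then become $\bO(\epsilon^3/\delta^2)$, which does not vanish for the paper's own choices $\epsilon=\sqrt{h}$, $\delta=h/2$. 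Moreover, the justification you offer (that $\bar{x}_j^2/|\xvec_j-\xvec_0|^2=\cos^2$ of a small angle is bounded away from zero) controls $\sum_j a_j|\xvec_j-\xvec_0|^2$, not $\sum_j a_j$ itself. The repair is elementary and keeps your conclusion intact: drop the bound on $\sum_j a_j$ entirely and estimate the cubic remainder in the same ratio-based way as your second-order terms, namely $\sum_j a_j|\xvec_j-\xvec_0|^3\leq \max_j|\xvec_j-\xvec_0|\cdot\sum_j a_j|\xvec_j-\xvec_0|^2\leq \epsilon\,(1+\bO(d\phi^2))\sum_j a_j\bar{x}_j^2=\bO(\epsilon)$, after which Hypothesis~\ref{hyp:grid} gives $\epsilon\to0$ and $d\phi\to0$ and consistency follows as you argued.
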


\subsection{Approximation of nonlinear operators}\label{sec:Hessianevals}
	We can utilize these monotone approximations of the second directional derivatives to solve a wide class of fully nonlinear elliptic PDEs.  In this case of equations of the type~\eqref{eq:PDE1} that explicitly depend on directional derivatives over a finite subset of directions, an appropriate discretization is immediate.
\bq\label{eq:approx1}
F^h(\xvec,u(\xvec),u(\xvec)-u(\cdot)) = F\left(\xvec,u(\xvec),\Dt_{\bm{\nu}\bm{\nu}}u(\xvec); \bm{\nu}\in\Af\right), \quad \xvec \in \G^h\cap\Omega.
\eq
Monotonicity follows immediately from the fact that the elliptic operator $F$ is a non-increasing function of the second directional derivatives, which are approximated with a negative monotone discretization.
	
We are also interested in functions of the eigenvalues of the Hessian~\eqref{eq:PDE2}.  The Rayleigh quotient formulation that was used in two dimensions~\eqref{eq:eigs2D} does not immediately provide an expression for all three of the eigenvalues that we need to consider in three dimensions.  Straightforward modifications of this formula, involving minimization over an appropriate subspace of $\R^3$ to obtain the middle eigenvalue $\lambda_2(D^2u)$, do not lead to monotone approximations for $\lambda_2(D^2u)$.

Instead, we use properties from linear algebra to provide a new formulation that will allow for an appropriate monotone approximation of nonlinear functions of $\lambda_1(D^2u), \lambda_2(D^2u), \lambda_3(D^2u)$. 
To accomplish this, we first consider all sets of orthonormal coordinate frames in $\R^d$ (where we are particularly interested in $d=3$ in this work).	
	\bq\label{eq:orthCoords}V = \{ (\bm{\nu}_1,...,\bm{\nu}_d) \mid \bm{\nu}_j \in \R^d, \|\bm{\nu}_j\|_2 = 1,\bm{\nu}_j \perp \bm{\nu}_i \A i \neq j \}.\eq
	
	\begin{lemma}[Functions of eigenvalues]\label{lem:functionEigs}
	Let $G:\R\to\R$ be non-increasing, $\phi:\R\to\R$ concave, and $A$ a symmetric real-valued $d\times d$ matrix. Then
	\bq\label{eq:functionEigs}G\lp\sum_{j=1}^{d} \phi(\la_j(A))\rp = \max_{(\bm{\nu}_1,\bm{\nu}_2,...\bm{\nu}_d) \in V}G\lp \sum_{j=1}^d \phi(\bm{\nu}_j^TA\bm{\nu}_j)\rp.\eq
	\end{lemma}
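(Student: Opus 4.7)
The plan is to reduce the identity to an inequality on the sum inside $G$, then exploit concavity of $\phi$ via Jensen's inequality on an appropriate probability vector built from an orthonormal change of basis.

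First I would observe that since $G$ is non-increasing, $\max G(\cdot)$ over frames is equivalent to $G$ applied to $\min \sum \phi(\bm{\nu}_j^T A \bm{\nu}_j)$ over $(\bm{\nu}_1,\ldots,\bm{\nu}_d)\in V$. So the lemma reduces to proving
\[
\sum_{j=1}^d \phi(\lambda_j(A)) = \min_{(\bm{\nu}_1,\ldots,\bm{\nu}_d)\in V} \sum_{j=1}^d \phi(\bm{\nu}_j^T A \bm{\nu}_j),
\]
together with the fact that the minimum is attained.

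Next I would diagonalize $A = Q\Lambda Q^T$ where $Q=[\mathbf{e}_1\mid\cdots\mid\mathbf{e}_d]$ is the orthonormal matrix of eigenvectors and $\Lambda=\mathrm{diag}(\lambda_1(A),\ldots,\lambda_d(A))$. For any orthonormal frame $(\bm{\nu}_1,\ldots,\bm{\nu}_d)$, expand each $\bm{\nu}_j = \sum_i c_{ij}\mathbf{e}_i$, so that
\[
\bm{\nu}_j^T A \bm{\nu}_j = \sum_{i=1}^d c_{ij}^2 \lambda_i(A).
\]
The matrix $C=(c_{ij})$ is orthogonal, so $\sum_i c_{ij}^2 = 1$ for each $j$ (columns are unit vectors) and $\sum_j c_{ij}^2 = 1$ for each $i$ (from $CC^T=I$). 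In particular, for each fixed $j$ the weights $\{c_{ij}^2\}_i$ form a probability distribution. Since $\phi$ is concave, Jensen's inequality gives
\[
\phi\!\left(\bm{\nu}_j^T A \bm{\nu}_j\right) = \phi\!\left(\sum_{i=1}^d c_{ij}^2 \lambda_i(A)\right) \geq \sum_{i=1}^d c_{ij}^2 \phi(\lambda_i(A)).
\]
Summing over $j$ and swapping the order of summation:
\[
\sum_{j=1}^d \phi\!\left(\bm{\nu}_j^T A \bm{\nu}_j\right) \geq \sum_{i=1}^d \phi(\lambda_i(A)) \sum_{j=1}^d c_{ij}^2 = \sum_{i=1}^d \phi(\lambda_i(A)),
\]
using $\sum_j c_{ij}^2 = 1$ in the last equality.

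Finally I would verify that the lower bound is attained by choosing $\bm{\nu}_j = \mathbf{e}_j$, the eigenvectors of $A$, which gives $\bm{\nu}_j^T A \bm{\nu}_j = \lambda_j(A)$ and hence equality throughout. Applying the non-increasing function $G$ flips the inequality, turning the minimum of the inner sum into the maximum of $G(\cdot)$, which establishes \eqref{eq:functionEigs}. The only subtle point is the bookkeeping of the double-stochastic structure of $(c_{ij}^2)$, but there is no real obstacle: this is simply Jensen's inequality applied column-by-column and then summed, with orthonormality of $C$ cleaning up the remainder. (Equivalently, one could invoke the Schur--Horn theorem together with Schur-concavity of symmetric concave sums, but the direct Jensen argument above is more elementary and self-contained.)
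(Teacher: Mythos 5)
Your proposal is correct and follows essentially the same argument as the paper: diagonalize $A$, expand each frame vector in the eigenvector basis, apply Jensen's inequality to the probability weights $c_{ij}^2$ coming from orthonormality, sum and use the column/row sum identities, note equality when the frame coincides with the eigenvectors, and let the non-increasing $G$ convert the minimum of the inner sum into the stated maximum. Your handling of the doubly stochastic structure and the explicit attainment step is, if anything, slightly more careful than the paper's write-up, but it is the same proof.
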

	\begin{proof}
	Since $A$ is a real-valued symmetric matrix, we can find $d$ orthonormal eigenvectors $\mathbf{v}_1, \ldots, \mathbf{v}_d$. Any $(\bm{\nu}_1,\bm{\nu}_2,...\bm{\nu}_d) \in V$ can be expressed as a linear combination of these eigenvectors:
	$$\bm{\nu}_j = \sum_{k=1}^d c_{jk} \mathbf{v}_k = \sum_{k=1}^d (\bm{\nu}_j^T \mathbf{v}_k)\mathbf{v}_k.$$
	Since $\bm{\nu}_j$ and $\mathbf{v}_j$ are both orthonormal, we can also compute
	$$\sum_{k=1}^d c_{jk}^2 = \lp\sum_{k=1}^d c_{jk}\mathbf{v}_k\rp\lp\sum_{l=1}^d c_{jl} \mathbf{v}_l\rp = \bm{\nu}_j^T \bm{\nu}_j = 1,$$
	$$\sum_{k=1}^d c_{jk}^2 = \mathbf{v}_k^T \lp\sum_{j=1}^d \bm{\nu}_j\bm{\nu}_j^T\rp \mathbf{v}_k = \mathbf{v}_k^T I \mathbf{v}_k =\mathbf{v}_k^T \mathbf{v}_k= 1.$$

Now for any unit vector $\bm{\nu}_j$, we can use Jensen's inequality to estimate
\[ \phi(\bm{\nu}_j^TA\bm{\nu}_j) = \phi\left(\sum\limits_{k=1}^d c_{jk}^2\lambda_k\right) \geq \sum\limits_{k=1}^d c_{jk}^2\phi(\lambda_k).  \]
Summing these concave functions yields
\[ \sum\limits_{j=1}^d\phi(\bm{\nu}_j^TA\bm{\nu}_j) \geq \sum\limits_{j=1}^d\sum\limits_{k=1}^d c_{jk}^2\phi(\lambda_k)= \sum\limits_{k=1}^d\phi(\lambda_k) \]
with equality if the $(\bm{\nu}_1, \ldots, \bm{\nu}_d)$ coincide with the eigenvectors $(\mathbf{v}_1, \ldots, \mathbf{v}_d)$ of $A$.

Since $G$ is non-increasing, we conclude that
\[G\lp\sum_{j=1}^{d} \phi(\la_j(A))\rp = \max_{(\bm{\nu}_1,\bm{\nu}_2,...\bm{\nu}_d) \in V}G\lp \sum_{j=1}^d \phi(\bm{\nu}_j^TA\bm{\nu}_j)\rp. \qedhere\]
	\end{proof}
	\begin{remark}
	This also applies if $G$ is non-decreasing and $\phi$ is convex, and if the maximum is replaced with a minimum, $G$ can be non-decreasing with $\phi$ concave, or non-increasing with $\phi$ convex.
	\end{remark}
	
This formulation immediately suggests a consistent, monotone approximation of the functions of the eigenvalues of the Hessian $D^2u$ since $\bm{\nu}^T (D^2u) \bm{\nu}$ is identical to the second directional derivative $u_{\bm{\nu}\bm{\nu}}$.  That is, for equations of the form~\eqref{eq:PDE2},
\bq\label{eq:approxfuneigs} F(\lambda_1(D^2u),\lambda_2(D^2u),\lambda_3(D^2u)) \approx  \max_{(\bm{\nu}_1,\bm{\nu}_2,...\bm{\nu}_d) \in V}G\lp \sum_{j=1}^d \phi(\Dt_{\bm{\nu}_j\bm{\nu}_j}u)\rp. \eq
However, this is not computationally feasible as it requires computing a maximum over an infinite set of orthogonal coordinate frames.

	Instead, we must consider some finite subset $V^h$ of the possible orthogonal frames $V$.    
	We begin with a finite subset $E^h\subset\mathbb{S}^2$ of unit vectors in $\R^3$.  Then we let
	\[ V^h = \{(\bm{\nu}_1, \ldots, \bm{\nu}_d) \in V \mid \bm{\nu}_j\perp\bm{\nu}_i \forall j \neq i\}. \]
	We can define the angular resolution of this subset to be 
	\bq\label{eq:angularFrames}d\theta = \max\limits_{\mathbf{v}_1, ..., \mathbf{v}_d \in V}\min\limits_{(\bm{\nu}_1, ..., \bm{\nu}_d) \in V^h}\max\limits_i \cos^{-1}(\mathbf{v}_i\cdot{\bm{\nu}}_i).\eq
	That is, for each frame in $V$, we first find the frame in $V^h$ that minimizes the worst case angle between $\mathbf{v}_i$ and $\bm{\nu}_i$. 
	Then, we maximize over all possible frames in $V$ to find the worst case $d\theta$.

Many suitable choices of $V^h$ are possible, and this immediately leads to an appropriate discretization.
\begin{lemma}[Consistent, monotone approximation]\label{lem:scheme2Exists}
Consider a grid $\G$ satisfying Hypothesis~\ref{hyp:grid} and a finite set $V^h \subset V$ chosen so that the angular resolution $d\theta\to0$ as $h\to0$.  Let $G$ be continuous and non-increasing and $\phi$ be concave.  Then
\bq\label{eq:approx2}
F^h(\xvec,u(\xvec)-u(\cdot)) = \max_{(\bm{\nu}_1,\bm{\nu}_2,...\bm{\nu}_d) \in V^h}G\lp \sum_{j=1}^d \phi(\Dt_{\bm{\nu}_j\bm{\nu}_j}u(\xvec))\rp, \quad \xvec \in \G \cap \Omega
\eq
is a consistent, monotone approximation of~\eqref{eq:PDE2}.
\end{lemma}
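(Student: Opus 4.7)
The plan is to establish monotonicity and consistency separately, after which the claim follows from Definitions~\ref{def:monotonicity} and~\ref{def:consistency}.

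\emph{Monotonicity.} By Corollary~\ref{cor:schemeExists}, every directional derivative approximation has the form $\Dt_{\bm{\nu}\bm{\nu}}u(\xvec) = -\sum_i a_i\bigl(u(\xvec)-u(\xvec_i)\bigr)$ with $a_i\geq 0$, so it is non-increasing in each argument $u(\xvec)-u(\xvec_i)$. Under the degenerate-ellipticity hypothesis on~\eqref{eq:PDE2}, the pairing of non-increasing $G$ with concave $\phi$ forces $\phi$ to be non-decreasing on the relevant range; hence $G\bigl(\sum_j\phi(\Dt_{\bm{\nu}_j\bm{\nu}_j}u)\bigr)$ is non-decreasing in each $u(\xvec)-u(\xvec_i)$. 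The pointwise maximum preserves this property, so $F^h$ is monotone.

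\emph{Consistency.} Fix a smooth test function $\psi$ and $\xvec\in\bar\Omega$. Since $F$ is continuous in $D^2\psi$, it suffices to prove $F^h(\xvec,\psi(\xvec)-\psi(\cdot))\to F(\lambda_j(D^2\psi(\xvec)))$. I would proceed by matching bounds. For the lower bound, let $(\bm{\nu}^*_1,\ldots,\bm{\nu}^*_d)\in V$ attain the maximum in Lemma~\ref{lem:functionEigs}; using $d\theta\to 0$, select $(\bm{\nu}^h_j)\in V^h$ with $\bm{\nu}^h_j\to\bm{\nu}^*_j$. Applying Corollary~\ref{cor:schemeExists} along this converging sequence of directions yields $\Dt_{\bm{\nu}^h_j\bm{\nu}^h_j}\psi\to(\bm{\nu}^*_j)^T D^2\psi(\xvec)\bm{\nu}^*_j$. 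Continuity of $\phi$ and $G$ and the definition of $F^h$ as a maximum over $V^h$ then give
\[
\liminf F^h\ge G\!\left(\sum_j\phi\bigl((\bm{\nu}^*_j)^T D^2\psi(\xvec)\bm{\nu}^*_j\bigr)\right)=F(\lambda_j(D^2\psi(\xvec))),
\]
where the last equality uses Lemma~\ref{lem:functionEigs}. For the upper bound, let $(\bm{\nu}^{h,*}_j)\in V^h$ attain the discrete maximum defining $F^h$, and extract a subsequence converging to some $(\bm{\nu}^\infty_j)\in V$ by compactness of the set of orthonormal frames. The same consistency argument, applied along this subsequence, gives $\limsup F^h\le G\bigl(\sum_j\phi((\bm{\nu}^\infty_j)^T D^2\psi(\xvec)\bm{\nu}^\infty_j)\bigr)$, and the right-hand side is bounded above by $\max_V=F(\lambda_j(D^2\psi(\xvec)))$.

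The main obstacle is justifying consistency of $\Dt_{\bm{\nu}\bm{\nu}}\psi$ when the direction $\bm{\nu}$ itself varies with $h$. Corollary~\ref{cor:schemeExists} is stated for a fixed $\bm{\nu}$, but the Taylor-expansion argument behind it produces an error that is uniform over a compact set of directions, since the stencil selection~\eqref{eq:nbs} and the angular resolution bound $d\phi=\max\{\bO(h/\epsilon),\bO(h_B/\delta)\}$ depend on $\bm{\nu}$ only through rotations of the local coordinate frame. Combining this uniform-in-frame consistency with the angular density $d\theta\to 0$ of $V^h\subset V$ and uniform continuity of $\phi$ and $G$ on the compact range determined by $D^2\psi(\xvec)$ is the crux that makes the two matching-bound arguments above rigorous.
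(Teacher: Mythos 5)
Your proof is correct and follows the route the paper intends: the lemma is stated there without proof as an immediate consequence of Lemma~\ref{lem:functionEigs} and Corollary~\ref{cor:schemeExists}, and your argument---monotonicity by composing the negative-monotone directional-derivative stencils with non-decreasing $\phi$ and non-increasing $G$, consistency via two-sided bounds using the angular density $d\theta\to0$ of $V^h$ in $V$ together with uniform-in-direction consistency of the stencils---is exactly the fleshed-out version of that. Your remark that degenerate ellipticity effectively forces $\phi$ to be non-decreasing (as it is in all of the paper's examples) correctly supplies a hypothesis the lemma statement leaves implicit.
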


\subsection{Boundary conditions}\label{sec:boundarycond}
We now turn our attention to the approximation of boundary conditions.  Dirichlet boundary conditions are straightforward.  However, we are also interested in constructing monotone schemes for Neumann or Robin boundary conditions, as well as the nonlinear second type (optimal transport) boundary condition $\nabla u(\Omega_1) \subset \bar{\Omega}_2$.
	
	\subsubsection{Approximation of first derivatives}\label{sec:derivs1}
We begin by describing the approximation of first directional derivatives in directions $\mathbf{n}$ exterior to the domain.  That is, letting $\mathbf{n}_\xvec$ be the unit outward normal to the domain at the point $\xvec\in\partial\Omega$, we discretize derivatives $u_\mathbf{n}(\xvec_0) = \nabla u(\xvec_0) \cdot \mathbf{n}$  for directions $\mathbf{n}$ satisfying $\mathbf{n} \cdot \mathbf{n}_{\xvec_0} > 0$.

As with the interior, simple schemes can be used if the direction $\mathbf{n}$ is well-aligned with the grid (locally at the boundary point $\xvec_0$).
That is, if $\mathbf{n}\in\Z^3$ and $\abs{\mathbf{n}}h < \epsilon$, we could utilize the upwind scheme
\bq\label{eq:scheme1aligned}
\Dt_\mathbf{n}(\xvec_0) = \frac{u(\xvec_0)-u(\xvec_0-\mathbf{n} h)}{\abs{\mathbf{n}}h}.
\eq
However, given that the boundary is highly resolved relative to the interior and that complicated domains are possible, we do not expect this simple approximation to be possible in general.
	
		In the interior, we were able to construct monotone schemes by choosing neighbors in different octants relative to the direction and the point of interest.
		On the boundary, a similar approach yields monotone schemes for the first directional derivatives using only interior neighbors.

Taylor expanding as before, we get
\bq\label{eq:taylorBdy}
\begin{aligned}
\Dt_{\mathbf{n}}u(\xvec_0) &= \sum\limits_{j=1}^m a_j(u(\xvec_j)-u(\xvec_0))\\
  &= \sum\limits_{j=1}^m a_j \left[u_x(\xvec_0)(x_j-x_0) + u_y(\xvec_0)(y_j-y_0) + u_z(\xvec_0)(z_j-z_0) \right] + h.o.t.
\end{aligned}
\eq

Consistency is achieved by equating the coefficients of the various first partial derivatives to the components $n_1,n_2,n_3$ of the unit direction $\hat{\mathbf{n}}$.  Coupled with the (positive) monotonicity requirement, we obtain the system
		\bq
		\begin{cases}
		\sum\limits_{j=1}^m a_j (x_j-x_0) = {n}_1\\
		\sum\limits_{j=1}^m a_j (y_j-y_0) = {n}_2\\
		\sum\limits_{j=1}^m a_j (z_j-z_0) = {n}_3\\
		a_j \leq 0.
		\end{cases}\label{eq:Nscheme}
		\eq

%

As a simple way of selecting appropriate neighbors, we let $C_{ijk}$ be the first small cube~\eqref{eq:smallCube} entered by the ray $\xvec_0 - t \mathbf{n}$.  We choose as neighbors $\xvec_1, \xvec_2, \xvec_3, \xvec_4$ the four vertices of the face through which this ray enters the small cube.
	
		
\begin{lemma}[Existence of a negative solution]\label{sec:positiveBdy}
A negative solution to the system of equations~\eqref{eq:Nscheme} exists if $\xvec_0 - t\mathbf{n}$ lies in the convex hull of the four vertices $\xvec_1, \xvec_2, \xvec_3, \xvec_4$  of a square for some $t>0$.
\end{lemma}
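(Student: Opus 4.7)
The plan is to construct the required $a_j$ directly from the convex combination coefficients guaranteed by the hypothesis, avoiding any appeal to Farkas' Lemma (though that would also work, in analogy with Lemma~\ref{lem:positiveSolution}). The three equality constraints and one sign constraint in~\eqref{eq:Nscheme} can be viewed as the statement that $-\mathbf{a}/\|\mathbf{a}\|$ encodes a convex combination (up to scaling by $t = \sum_j (-a_j)$), which is exactly the geometric assumption we are given.

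Concretely, by hypothesis there exist scalars $\lambda_1,\ldots,\lambda_4 \geq 0$ with $\sum_{j=1}^4 \lambda_j = 1$ and some $t > 0$ such that
\[ \xvec_0 - t\mathbf{n} = \sum_{j=1}^4 \lambda_j \xvec_j. \]
Subtracting $\xvec_0 = \sum_j \lambda_j \xvec_0$ from both sides rearranges this to
\[ \sum_{j=1}^4 \lambda_j (\xvec_j - \xvec_0) = -t\mathbf{n}. \]
I would then define $a_j := -\lambda_j/t$ for $j = 1,2,3,4$. Since $t > 0$ and $\lambda_j \geq 0$ we immediately obtain $a_j \leq 0$, verifying the monotonicity/sign constraint. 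Dividing the displayed identity by $-t$ gives
\[ \sum_{j=1}^4 a_j (\xvec_j - \xvec_0) = \mathbf{n}, \]
and reading off coordinates yields the three equality constraints of~\eqref{eq:Nscheme} (recalling $\mathbf{n}$ is taken as a unit vector so its components are $n_1,n_2,n_3$).

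The only real subtlety is to recall why the hypothesis can be satisfied by the construction suggested above the lemma: choosing $C_{ijk}$ to be the first small cube entered by the ray $\xvec_0 - t\mathbf{n}$, the ray must pierce some face of that cube, so the corresponding intersection point lies in the convex hull of the four vertices of that face. This is automatic in the geometric setup and presents no real obstacle. With the $a_j$ above, the resulting stencil~\eqref{eq:taylorBdy} is consistent with $u_\mathbf{n}(\xvec_0)$ to the leading order in $h$, since the construction only used the first-order Taylor expansion, and monotonicity in the boundary approximation follows from $a_j \leq 0$ together with the sign conventions used for Neumann-type discretizations.
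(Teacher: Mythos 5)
Your proposal is correct and uses essentially the same construction as the paper: writing $\xvec_0 - t\mathbf{n}$ as a convex combination of the face vertices and setting $a_j = -\lambda_j/t$, which gives $a_j\le 0$ and the three consistency equations of~\eqref{eq:Nscheme} by direct computation. The only cosmetic difference is that the paper first reduces to three of the four vertices (so $a_4=0$) and frames the verification through a piecewise linear interpolant, while you keep all four barycentric coefficients; both yield the same formula and neither step changes the substance of the argument.
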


\begin{proof}
Since $\xvec_0-t\mathbf{n}$ lies in the convex hull of the four corners of a square, then it also lies in the convex hull of three of these points.  Without loss of generality, let these be $\xvec_1, \xvec_2, \xvec_3$.  Then there exist $\lambda_1, \lambda_2, \lambda_3 \in [0,1]$ with $\lambda_1+\lambda_2+\lambda_3 = 1$ such that
\[ \xvec_0-t\mathbf{n} = \lambda_1\xvec_1 + \lambda_2\xvec_2 + \lambda_3\xvec_3. \]

Now we let $v(\xvec)$ be the piecewise linear interpolant of the values of $u(\xvec)$ at the points $\xvec_0,\xvec_1, \xvec_2, \xvec_3\in\R^3$.  Since $v$ is linear, we can compute its first directional derivative in the direction $\mathbf{n}$ via
\begin{align*}
\frac{\partial v}{\partial \mathbf{n}} &= \frac{v(\xvec_0)-v(\xvec_0-t\mathbf{n})}{t}\\
  &= \frac{v(\xvec_0)-\lambda_1v(\xvec_1)-\lambda_2v(\xvec_2)-\lambda_3v(\xvec_3)}{t}.
\end{align*}

Then we can easily verify that
\[ a_1 = -\frac{\lambda_1}{t}, \, a_2 = -\frac{\lambda_2}{t}, \, a_3 = -\frac{\lambda_3}{t}, \, a_4 = 0 \]
is a solution of~\eqref{eq:Nscheme}.
\end{proof}

\begin{cor}[Existence of a consistent, monotone scheme]\label{cor:schemeExistsBdy}
Consider a grid $\G$ satisfying Hypothesis~\ref{hyp:grid} and let $\mathbf{n}$ be any vector in $\R^3$ exterior to the domain $\Omega$ at the point $\xvec_0\in\partial\Omega$.  Then the procedure described in \autoref{sec:derivs1} yields a consistent, monotone approximation of the first directional derivative $u_{\mathbf{n}}(\xvec_0)$.
\end{cor}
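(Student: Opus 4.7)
The plan is to bundle the corollary into three separate claims that together give consistency and monotonicity of the boundary scheme: (a) the construction is well-defined, in particular the neighbors $\xvec_1,\ldots,\xvec_4$ exist and satisfy the geometric hypothesis of Lemma~\ref{sec:positiveBdy}; (b) the system~\eqref{eq:Nscheme} admits a non-positive solution so the scheme is monotone in the sense required by Definition~\ref{def:monotonicity}; and (c) the resulting approximation is consistent with $u_{\mathbf{n}}(\xvec_0)$. Claim (b) is essentially immediate from Lemma~\ref{sec:positiveBdy}, so the bulk of the work is in (a) and (c).

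For (a), I would argue as follows. Because $\mathbf{n}$ is exterior to $\Omega$ at $\xvec_0\in\partial\Omega$, the inward ray $\xvec_0 - t\mathbf{n}$ enters the open interior of $\Omega$ for all sufficiently small $t>0$. Since the underlying Cartesian lattice from which the small cubes $C_{ijk}$ in~\eqref{eq:smallCube} are drawn tiles $\R^3$, the ray must enter some cube $C_{ijk}$ through one of its faces, at some distance of order $h$ from $\xvec_0$. For $h$ sufficiently small (using Hypothesis~\ref{hyp:grid}, in particular that $\delta=\bO(h)$ by construction and $h_B/\delta\to 0$), one can take this first cube far enough from the boundary that the four vertices of the entered face lie in $\G\cap\Omega$, and therefore may serve as admissible neighbors. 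Because the ray enters the face (crosses its interior at some point $\xvec_0-t_\star\mathbf{n}$), that point lies in the convex hull of the four vertices, so the hypothesis of Lemma~\ref{sec:positiveBdy} is satisfied.

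For (b), applying Lemma~\ref{sec:positiveBdy} produces coefficients $a_j\le 0$ solving~\eqref{eq:Nscheme}. For any boundary operator in which $u_{\mathbf{n}}$ appears in a degenerate elliptic fashion (such as $u_{\mathbf{n}} - g = 0$ or the second-type boundary condition), the resulting scheme
\begin{equation*}
F^h(\xvec_0,u(\xvec_0),u(\xvec_0)-u(\cdot)) = \sum_{j=1}^4 (-a_j)(u(\xvec_0)-u(\xvec_j)) - g(\xvec_0)
\end{equation*}
has non-negative coefficients $-a_j\ge 0$ multiplying the finite differences $u(\xvec_0)-u(\xvec_j)$, giving monotonicity in the sense of Definition~\ref{def:monotonicity}.

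For (c), I would substitute the closed-form $a_j = -\lambda_j/t_\star$ with $\lambda_j\in[0,1]$ and $\sum\lambda_j=1$ extracted from the proof of Lemma~\ref{sec:positiveBdy} back into the Taylor expansion~\eqref{eq:taylorBdy}. The first-order coefficients match the components of $\mathbf{n}$ by construction of the system, so
\begin{equation*}
\Dt_{\mathbf{n}}u(\xvec_0) = u_{\mathbf{n}}(\xvec_0) + \bO\!\left(\sum_j |a_j|\,|\xvec_j-\xvec_0|^2\right).
\end{equation*}
Since $|a_j|=\bO(1/t_\star)=\bO(1/h)$ and $|\xvec_j-\xvec_0|=\bO(h)$, the error is $\bO(h)\to 0$, which yields the consistency required by Definition~\ref{def:consistency}.

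The step I expect to be the main obstacle is part (a), specifically guaranteeing that the first cube actually hit by the ray has all four face-vertices in $\G$ rather than some of them being excluded by the $G(\xvec)+\delta<0$ cutoff. The resolution is to either (i) argue via Hypothesis~\ref{hyp:grid} that one advances along the ray only an $\bO(\delta)=\bO(h)$ distance before encountering a fully interior cube, or (ii) slightly generalize the selection rule so that one uses the first cube along the ray whose entering face has all vertices in $\G\cap\Omega$. Either way, the distance traveled remains $\bO(h)$, preserving the consistency estimate above.
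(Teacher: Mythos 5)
Your proposal is correct and follows essentially the same route as the paper, which states this corollary without further argument because it rests directly on Lemma~\ref{sec:positiveBdy} (negative coefficients for~\eqref{eq:Nscheme}, hence monotonicity) together with the Taylor expansion~\eqref{eq:taylorBdy}, whose first-order terms are matched exactly so that the remaining error vanishes as $h\to0$ --- exactly your parts (b) and (c). Your part (a), worrying about whether the entered face's vertices survive the $G(\xvec)+\delta<0$ cutoff, is extra diligence the paper glosses over; just note that the ``$\bO(h)$ distance'' and hence the $\bO(h)$ consistency bound carry constants depending on $\mathbf{n}\cdot\mathbf{n}_{\xvec_0}$ (and $\delta$), which is harmless for the fixed exterior direction $\mathbf{n}$ in the statement.
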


\subsubsection{Approximation of optimal transport conditions}\label{sec:BVP2}
	In optimal transport and many geometric PDEs~\cite{brendle2010}, a traditional boundary condition is replaced by the so-called second type boundary condition $\nabla u(\Omega_1) \subset \bar{\Omega}_2$ where $\Omega_2 \subset \R^3$ is convex and the solution $u$ is also required to be convex.
	
This global constraint can be re-expressed as a nonlinear Hamilton-Jacobi equation on the boundary
\bq\label{eq:HJ}  H(\nabla u(\xvec)) = 0, \quad \xvec \in \partial\Omega_1\eq
where $H$ is the signed distance function to the boundary of the target set $\Omega_2$.  By utilizing the Legendre-Fenchel transform, it is possible to rewrite this in the form
\bq\label{eq:HJBC}
\sup\limits_{\mathbf{n}\cdot \mathbf{n}_\xvec > 0}\left\{ \nabla u(\xvec) \cdot \mathbf{n} - H^*(\mathbf{n}) \right\} = 0, \quad \xvec \in \partial\Omega_1.
\eq
		
This immediately allows us to construct an appropriate discretization using our approximations for first directional derivatives $u_\mathbf{n}$ and the finite subset $E^h$ of unit vectors in $\R^3$.  
\begin{lemma}[Consistent, monotone approximation]\label{lem:scheme2Exists}
Consider a grid $\G$ satisfying Hypothesis~\ref{hyp:grid} and a finite set $E^h \subset \mathbb{S}^2$ chosen so that the angular resolution $d\theta\to0$ as $h\to0$.  Then
\bq\label{eq:approxHJBC}
H^h(\xvec,u(\xvec)-u(\cdot)) = \max\limits\left\{\Dt_{\bm{\nu}}(\xvec)-H^*(\mathbf{n}) \mid \mathbf{n} \in E^h, \, \mathbf{n}\cdot \mathbf{n}_\xvec > 0\right\}, \quad \xvec\in \G\cap\partial\Omega_1
\eq 
is a consistent, monotone approximation of~\eqref{eq:HJBC}.
\end{lemma}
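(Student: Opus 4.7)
The plan is to verify monotonicity and consistency separately; together these establish the claim in the senses of \autoref{def:monotonicity} and \autoref{def:consistency}. \textbf{Monotonicity.} I would rewrite each component scheme as $\Dt_{\mathbf{n}}(\xvec_0) = \sum_j (-a_j)\bigl(u(\xvec_0)-u(\xvec_j)\bigr)$; Lemma~\ref{sec:positiveBdy} guarantees $a_j \leq 0$, so each weight $-a_j$ is nonnegative and the component scheme is non-decreasing in its $u(\xvec)-u(\cdot)$ arguments. Subtracting the $u$-independent constant $H^*(\mathbf{n})$ preserves this, and the pointwise maximum over $\mathbf{n}\in E^h$ of finitely many monotone expressions is itself monotone.

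\textbf{Consistency.} Fix $\phi\in C^2(\bar{\Omega}_1)$ and $\xvec\in\partial\Omega_1$. Corollary~\ref{cor:schemeExistsBdy} implies that for each admissible $\mathbf{n}$ we have $\Dt_\mathbf{n}\phi(\mathbf{y}) \to \nabla\phi(\xvec)\cdot\mathbf{n}$ as $\mathbf{y}\to\xvec$ and $h\to 0$, and the underlying error estimates depend only on $h/\epsilon$, $h_B/\delta$, and bounds on $\phi$, so the convergence is uniform in $\mathbf{n}\in\mathbb{S}^2$. For the upper (limsup) inequality, this uniform estimate gives
\[
\limsup_{h\to 0,\,\mathbf{y}\to\xvec} H^h\bigl(\mathbf{y},\phi(\mathbf{y})-\phi(\cdot)\bigr) \leq \sup_{\mathbf{n}\cdot\mathbf{n}_\xvec \geq 0}\bigl\{\nabla\phi(\xvec)\cdot\mathbf{n} - H^*(\mathbf{n})\bigr\},
\]
which coincides with the upper semicontinuous envelope of the continuous operator~\eqref{eq:HJBC}. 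For the lower (liminf) inequality, given any $\mathbf{n}_0$ with $\mathbf{n}_0\cdot\mathbf{n}_\xvec > 0$, the hypothesis $d\theta\to 0$ supplies a sequence $\mathbf{n}_h\in E^h$ with $\mathbf{n}_h\to\mathbf{n}_0$; then $H^h \geq \Dt_{\mathbf{n}_h}\phi(\mathbf{y}) - H^*(\mathbf{n}_h)$, and lower semicontinuity of $H^*$ together with the uniform convergence of $\Dt_\mathbf{n}\phi$ lets one pass to the limit. Taking the supremum over $\mathbf{n}_0$ then recovers the full continuous operator.

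\textbf{Main obstacle.} I expect the principal difficulty to lie in handling $H^*$ at the relative boundary of the admissible half-sphere $\{\mathbf{n}\cdot\mathbf{n}_\xvec \geq 0\}$, where $H^*$ may be $+\infty$ or discontinuous whenever $\Omega_2$ is merely convex rather than strictly convex. Invoking the envelopes $F^*$ and $F_*$ in \autoref{def:consistency} should handle this cleanly: the closed half-sphere enters the upper inequality while only the open half-sphere enters the lower one, and the convexity of $H^*$ supplies enough semicontinuity to pass the limits through in each case.
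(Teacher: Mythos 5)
Your proposal is correct and follows the route the paper itself intends (the paper states this lemma without proof, as an immediate consequence of Lemma~\ref{sec:positiveBdy} and Corollary~\ref{cor:schemeExistsBdy} for the monotone, consistent approximation of $u_{\mathbf{n}}$, combined with the density condition $d\theta\to0$ and the envelopes in Definition~\ref{def:consistency}): monotonicity from the sign condition $a_j\le 0$ plus the maximum of finitely many monotone expressions, consistency from the directional-derivative consistency and approximation of the continuous sup by the discrete max. One small correction: in the liminf step you need $\limsup_h H^*(\mathbf{n}_h)\le H^*(\mathbf{n}_0)$, i.e.\ upper rather than lower semicontinuity of $H^*$, which holds because on unit vectors $H^*$ is the support function of the bounded convex target $\bar{\Omega}_2$ and hence Lipschitz continuous -- for the same reason your anticipated obstacle of $H^*$ being $+\infty$ on the admissible half-sphere does not actually arise.
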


\section{Implementation}\label{sec:implementation}
The preceding section shows how to define a consistent, monotone approximation for a wide range of fully nonlinear elliptic operators in three dimensions.  However, naive evaluation of these approximation schemes may be computationally intractable in three dimensions.  In this section, we discuss details of our implementation that allow us to evaluate these schemes efficiently.

\subsection{Discretization parameters}
There are many valid choices for the parameters used to construct our numerical discretization.  The particular choices used in our implementation are motivated by the need for efficiency that is brought to the forefront in three dimensions.

We begin with the parameters used to define the grid in \autoref{sec:grid}, particularly the boundary resolution $h_B$ and the gap to the boundary $\delta$.  We recall that the total number of boundary points scales like $\bO(n^2n_B^2)$ while the total number of interior points scales like $\bO(n^3)$.  While the boundary has to be more highly resolved than in a traditional finite difference grid, we still desire the number of boundary points to be less than the number of interior points to prevent this high resolution from significantly impacting computational cost.  With this in mind, we choose $n_B \approx n^{1/4}$, so that the total number of boundary points is $\bO(n^{5/2}) \ll n^3$.  Note that the boundary resolution is then $h_B = \bO(h/n_B) = \bO(h^{5/4})$, which is asymptotically less than $h$ as required by Hypothesis~\ref{hyp:grid}.  In order to satisfy the condition $h_B \ll \delta$, we choose $\delta = h/2$.

The procedure for approximating second directional derivatives also requires us to define a search radius $\epsilon\gg h$.  We recall that (as long as we are not too close to the boundary), the discretization error of these approximations depends on both the effective spatial resolution $\epsilon$ and the angular resolution $d\phi = \bO(h/\epsilon)$.  Motivated by the desire to balance these two components of the error, we choose $\epsilon = \sqrt{h}$.  We note that the discretization error near the boundary may be slightly larger (as it is also influenced by our choice of $h_B$ and $\delta$).  However, this occurs in only a narrow band near the boundary and need not necessarily affect the scaling of the overall error in the computed solution.

If we are considering functions of the eigenvalues of the Hessian~\eqref{eq:PDE2}, we also need to define a discretization $V^h$ of orthogonal coordinate frames.  In two dimensions, this is very straightforward.  However, in three dimensions, the number of possible coordinate frames can quickly become very large as the resolution $d\theta$ of $V^h$ is improved.  In light of the need to conserve computational resources, we would like to make use of the simple centered scheme~\eqref{eq:d2center} as much as possible.  For this reason, we restrict our attention to grid aligned directions remaining within our search radius and define
\bq\label{eq:Eh}
E^h = \left\{\bm{\nu}\in\Zf^3 \mid \|\bm{\nu}_j\|_\infty \leq \epsilon/h \right\}.
\eq
and
\bq\label{eq:Vh}
V^h = \left\{(\bm{\nu}_1,\bm{\nu}_2,\bm{\nu}_3) \mid \bm{\nu}_j \in E^h, \bm{\nu}_i\perp\bm{\nu}_j \A i \neq j\right\}.
\eq  
Then the more complicated generalized schemes only need to be constructed in a band of width $\epsilon$ around the boundary.  The resulting angular resolution will scale like $d\theta = \bO(h/\epsilon)$, which is comparable to the angular resolution $d\phi$ of the generalized finite difference stencils.

Since the second type boundary condition~\eqref{eq:HJBC} is typically coupled to PDEs that depend on the eigenvalues of the Hessian,
it is natural to use this same discretization of unit vectors in the approximation of this boundary condition.

\subsection{Identification of orthogonal coordinate frames}
	In this section, we discuss the evaluation of the expression in~\eqref{eq:approxfuneigs}, which requires computing a maximum/minimum over many different orthogonal frames in order to achieve a consistent approximation of functions of the eigenvalues of the Hessian.
	
Identifying all possible coordinate frames occurring in~\eqref{eq:Vh} can be done offline.  For each integer stencil width $k\in\Nf$, we can use brute force to construct and save the coordinate frames that can be constructed using this stencil width.  That is, we define
\bq\label{eq:Ek}
E_k = \left\{\bm{\nu}\in\Zf^3 \mid \|\bm{\nu}\|_\infty \leq k \right\}.
\eq
and
\bq\label{eq:Vh}
V_k = \left\{(\bm{\nu}_1,\bm{\nu}_2,\bm{\nu}_3) \mid \bm{\nu}_j \in E_k, \bm{\nu}_i\perp\bm{\nu}_j \A i \neq j\right\}.
\eq  
Then for a given problem (which may involve many different choices of domains, grid resolutions, particular PDEs, etc.), we simply define $V^h = V_{k^*}$ where $k^* = \floor{\epsilon/h}$.  As these have been pre-computed, there is effectively no computational cost to identifying the relevant coordinate frames.

However, actually solving a PDE involving the eigenvalues of the Hessian can require multiple evaluations of a minimum/maximum over all possible frames.  In three dimensions, this becomes very expensive.  For this reason, we propose a multi-level approach for obtaining the solution of these maximum/minimum problems.  We will focus the discussion on the problem of solving systems of the form
\bq\label{eq:maxProblem} \max\limits_{(\bm{\nu}_1,\bm{\nu}_2,\bm{\nu}_3)\in V_{k^*}} F(u;\bm{\nu}_1,\bm{\nu}_2,\bm{\nu}_3) = 0, \eq
which immediately provides a means for approximating schemes of the form~\eqref{eq:approxfuneigs}.

The idea of our approach is to first solve~\eqref{eq:maxProblem} over coordinate frames of maximum stencil width one.  From here, we identify the twenty-five coordinate frames (of maximum stencil width two) most closely aligned with the maximizer $(\bm{\nu}_1^{(1)},\bm{\nu}_2^{(1)},\bm{\nu}_3^{(1)})$ of the narrow stencil problem.  We once again solve~\eqref{eq:maxProblem} over this small set of possible coordinate frames.  This procedure can be repeated, maximizing over twenty-five coordinate frames at a time, until we are solving the system by maximizing over frames of the desired maximum width $k^*$. See Figure~\ref{fig:multlvl} for a visualization of this procedure.

	\begin{figure}[htbp]
	\begin{center}
	\includegraphics[width=\figwidth]{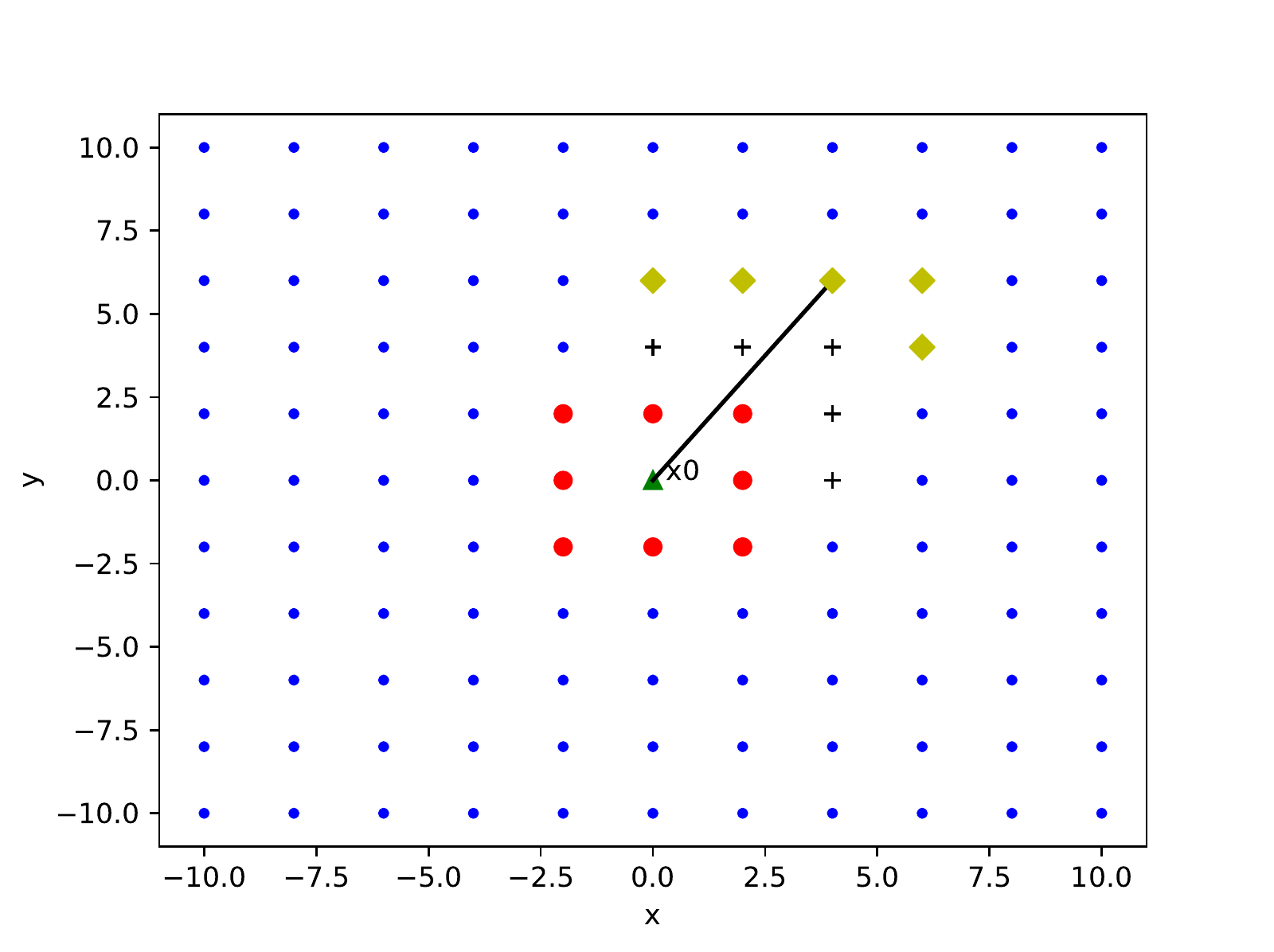}
	\end{center}
	\caption{A two-dimensional illustration of the multi-level process for one direction $\bm{\nu}_1$ in the orthogonal frame. The true direction that maximizes~\eqref{eq:maxProblem} is given by the black line. In the first level, we maximize over all the nearest (red dot) neighbors. We then identify the five (black plus) neighbors of stencil width two most closely aligned with the maximizer.  After maximizing over these five neighbors, we continue the procedure by identifying the best five (yellow diamond) neighbors of stencil width three.}
	\label{fig:multlvl}
	\end{figure}

We begin by producing a hierarchy of possible coordinate frames, which can be generated offline and stored.  To accomplish this, we explicitly identify the different components appearing in the sets of coordinate frames $V_k$. That is, we write
\[ V_k = \left\{(\bm{\nu}_1,\bm{\nu}_2,\bm{\nu}_3) \mid \bm{\nu}_1 \in V_k^{(1)}, \bm{\nu}_2\in V_k^{(2;\bm{\nu}_1)}, \bm{\nu}_3 = \bm{\nu}_1\times\bm{\nu}_2\right\} \]
where $V_k^{(1)}\subset E_k$ and 
\[ V_k^{(2;\bm{\nu}_1)} \subset \left\{\bm{\nu}_2\in E_k \mid \bm{\nu}_2\perp\bm{\nu}_1 \text{ for some }\bm{\nu}_1\in V_k^{(1)}\right\}. \]

Next, we create a map from each $\bm{\nu}_1\in V_k^{(1)}$ to the five most closely aligned vectors $\bm{\nu}\in V_{k+1}^{(1)}$.  We introduce the notation
\[ \alpha(\bm{\nu},\bm{\mu}) = \cos^{-1}\left(\frac{\bm{\nu}\cdot\bm{\mu}}{\|{\bm{\nu}}\|\|{\bm{\mu}}\|}\right) \]
to denote the angle between the vectors $\bm{\nu}$ and $\bm{\mu}$.  Then the map has the form
\[ T_{k+1}^{(1)}(\bm{\nu}_1) = \{\bm{\mu}_1,\bm{\mu}_2,\bm{\mu}_3,\bm{\mu}_4,\bm{\mu}_5\} \subset V_{k+1}^{(1)} \]
and satisfies the close alignment condition
\[ \alpha(\bm{\nu}_1,\bm{\mu}) \leq \alpha(\bm{\nu}_1,\bm{\xi}) \text{ for every } \bm{\mu}\in T_{k+1}^{(1)}(\nu_1), \bm{\xi}\in V_{k+1}^{(1)}-T_{k+1}^{(1)}(\bm{\nu}_1). \]

Similarly, we produce a map from each $\bm{\nu}_2 \in V_k^{(2;\nu_1)}$ to the five most closely aligned vectors in $V_{k+1}^{(2;\bm{\mu})}$.  This now has to be done for \emph{every} $\bm{\mu}\in V_{k+1}^{(1)}(\bm{\nu}_1)$ since we need to obtain nearby orthogonal coordinate frames, not merely nearby vectors.  That is, we define 
\[ T_{k+1}^{(2;\bm{\mu})}(\bm{\nu}_2) = \left\{\bm{\rho}_1,\bm{\rho}_2,\bm{\rho}_3,\bm{\rho}_4,\bm{\rho}_5\right\}\subset V_{k+1}^{(2;\bm{\mu})} \]
satisfying the close alignment condition
\[ \alpha(\bm{\nu}_2,\bm{\rho}) \leq \alpha(\bm{\nu}_2,\bm{\xi}) \text{ for every } \bm{\rho}\in T_{k+1}^{(2;\bm{\mu})}(\bm{\nu}_2), \bm{\xi}\in V_{k+1}^{(2;\bm{\mu})}-T_{k+1}^{(2;\bm{\mu})}(\bm{\nu}_2). \]

We emphasize again that all of the preceding work in building a hierarchy of maps can be accomplished offline and stored for later use in a wide variety of problems.  Then the actual work of solving~\eqref{eq:maxProblem} involves solving a small number of similar problems, each involving at most twenty-five possible coordinate frames.  The required online computations are summarized in the very short Algorithm~\ref{alg:maxProblem}.
	
\begin{algorithm}[h]
\caption{Estimating the solution of~\eqref{eq:maxProblem} over orthogonal coordinate frames.}
\label{alg:maxProblem}
\begin{algorithmic}[1]
\State $W_1 = V_1$
\For{$k=1,\ldots,k^*-1$}
\State $u$ $\Leftarrow$ solution of $\quad \max\limits_{(\bm{\nu}_1,\bm{\nu}_2,\bm{\nu}_3)\in W_k}F(u;\bm{\nu}_1,\bm{\nu}_2,\bm{\nu}_3) = 0$
\State $(\bm{\nu}_1,\bm{\nu}_2,\bm{\nu}_3) = \argmax\limits_{(\bm{\nu}_1,\bm{\nu}_2,\bm{\nu}_3)\in W_k}F(u;\bm{\nu}_1,\bm{\nu}_2,\bm{\nu}_3)$.
\State $W_{k+1} = \left\{(\bm{\mu}_1,\bm{\mu}_2,\bm{\mu}_3) \mid \bm{\mu}_1\in T_{k+1}^{(1)}(\bm{\nu}_1), \bm{\mu}_2\in T_{k+1}^{(2;\bm{\mu}_1)}(\bm{\nu}_2), \bm{\mu}_3 = \bm{\mu}_1\times\bm{\mu}_2 \right\}$.
\EndFor
\State $u$ $\Leftarrow$ solution of $\quad \max\limits_{(\bm{\nu}_1,\bm{\nu}_2,\bm{\nu}_3)\in W_{k^*}}F(u;\bm{\nu}_1,\bm{\nu}_2,\bm{\nu}_3)$.
\end{algorithmic}
\end{algorithm}

\subsection{Solution methods}
We also describe the techniques used in this implementation to solve the discrete system of nonlinear equations arising from our approximations.  
	In three dimensions, it is not practical to explicitly build Jacobian matrices due to their prohibitively large sizes.  Moreover, many of the PDEs we consider are degenerate and/or have singular solutions.  Thus, Newton's method is not immediately suitable for these problems.

Our approach here is to use a combination of an active set approach~\cite{Bokanowski_Howard}, which has excellent stability properties for the nonlinear systems we consider, and a simple Gauss-Seidel iteration.  Depending on the particular PDE of interest, the solver may collapse into only one of these methods or it may involve a combination.
In the future, this approach could be accelerated using a nonlinear multigrid method.
	
Recall that we are trying to solve systems of the form
\bq\label{eq:solverProb} \max\limits_{\bm{\nu}\in\Af} F(\xvec_i,u_i,\Dt_{\bm{\nu}} u_i, \Dt_{\bm{\nu}\bm{\nu}} u_i) = 0, \quad \xvec_i \in \G. \eq
The same approach works if the maximum above is replaced by a minimum.

The basic approach is to iterate through a two step process.  First, for a given input $u$, we identify the directions $\bm{\nu}_i\in\Af$ that maximize~\eqref{eq:solverProb} at each point in the domain.  Secondly, we fix this direction and seek an approximate solution of
\[F(\xvec_i,u_i,\Dt_{\bm{\nu}_i} u_i, \Dt_{\bm{\nu}_i\bm{\nu}_i} u_i) = 0.\]

In order to implement the second part of the procedure, we recall that our finite difference systems can be written in the form
\[ F(\xvec_i,u_i,\Dt_{\bm{\nu}} u_i, \Dt_{\bm{\nu}\bm{\nu}} u_i) = G_{\bm{\nu}}(\xvec_i,u_i,u_j) \]
where the $\xvec_j$ are points designated as neighbors of $\xvec_i$.  We design a Gauss-Seidel iteration for this by solving for the reference value $u_i$ in terms of the values at neighboring grid points.  That is, we identify a function $G^{-1}_{\bm{\nu}}(\xvec_i,u_j)$ such that
\[ G_{\bm{\nu}}(\xvec_i,G^{-1}_{\bm{\nu}}(\xvec_i,u_j),u_j) = G_{\bm{\nu}}(\xvec_i,u_i,u_j). \]
In some cases (for linear or simple nonlinear operators), this function $G^{-1}_{\bm{\nu}}$ can be identified explicitly.  For example if
\[ G(\xvec_i,u_i,u_j) = -\frac{u_{i+1}+u_{i-1}-2u_i}{h^2} + f(\xvec_i) \]
approximates a simple second derivative then
\[ G^{-1}(\xvec_i,u_j) = \frac{u_{i+1}+u_{i-1}}{2}-\frac{1}{2}f(\xvec_i)h^2. \]
In more complicated examples, this inverse can be obtained (or approximated) through several iterations of a nonlinear solver such as a scalar Newton's method.

	The resulting solution method is described in Algorithm~\ref{alg:solve}. This solver is simple to implement and memory efficient since there is no need to construct the Jacobian matrix. In practice, we can initialize the method with the solution computed on a less refined grid.
	
	\begin{algorithm} 
	  \caption{Solution method for~\eqref{eq:solverProb}}
	  \begin{algorithmic}[1]
	\While{\texttt{Residual > Tolerance}}
		\For{$\xvec_i \in \G$}
	      \State $\bm{\nu}_i = \argmax\limits_{\bm{\nu}\in\Af}G_{\bm{\nu}}(\xvec_i,u_i,u_j)$.
		\EndFor
		\For{$k=1,\ldots,10$}
			\For{$\xvec_i \in \G$}
				\State $u_i = G_{\bm{\nu}_i}^{-1}(\xvec_i,u_j)$.
			\EndFor
		\EndFor
	\EndWhile
	  \end{algorithmic}
	\label{alg:solve}
	\end{algorithm}

\subsection{Eigenvalue problems} 
Our framework can also be used to solve eigenvalue problems involving fully nonlinear elliptic PDEs.  These take the form
\bq\label{eq:eigProb}
\begin{cases}
F(\xvec,D^2u(\xvec)) = c, \quad \xvec \in \Omega\\
H(\xvec,\nabla u(\xvec)) = 0, \quad \xvec \in \partial\Omega\\
u(\xvec_0) = 0.
\end{cases}
\eq
Here the constant $c\in\R$ is unknown \emph{a priori}.  These arise from PDEs that require data to satisfy a solvability condition, which may not be known explicitly, may not be satisfied exactly by noisy data, or may not be satisfied at the discrete level even if the original continuous problem is well-posed. Examples that can be cast in this form include the Neumann problem for Poisson's equation, \MA type equations in optimal transport, and the problem of computing minimal Lagrangian graphs~\cite{brendle2010}.

With minor modification, monotone schemes can be used to correctly compute both the eigenvalue $c$ and the solution $u$~\cite{HL_lagrangian}.  Thus our discretization applies immediately to these problems.  The only modification needed is to add an additional unknown $c$ to the nonlinear system, which is also augmented by an additional equation ($u(\xvec_0)=0$) designed to select a unique solution.

\section{Computational Results}\label{sec:results}
We demonstrate the effectiveness of the method by solving a variety of computational examples including a range of challenging nonlinear PDEs and different boundary conditions. 

\subsection{Linear degenerate equation}
We begin by considering a linear degenerate equation posed on the unit sphere:
\bq\label{eq:linDegen}
\begin{cases}
-u_{\bm{\nu}\bm{\nu}}(x,y,z) = 0, & x^2 + y^2 + z^2 < 1\\
u(x,y,z) = \sin{(2\pi(x-\sqrt{2}y-\sqrt{3}z))}, & x^2 + y^2 + z^2 = 1
\end{cases}
\eq
where $\bm{\nu} = \lp1,-1,\frac{-(\sqrt{3}+\sqrt{6})}{3}\rp$.  The exact solution is
$$u(x,y,z) = \sin{(2\pi(x-\sqrt{2}y-\sqrt{3}z))}.$$
Note that the direction $\bm{\nu}$ is not aligned with any Cartesian grid. For this example, neither the grid aligned scheme we derived nor any other grid aligned scheme can be used for a consistent, monotone approximation~\cite{FroeseMeshfreeEigs,MW_Diff}.
Consequently, the generalized finite difference schemes must be used exclusively.

Convergence results are presented in Figure~\ref{LinDegen} and demonstrate better than the expected $\bO(\sqrt{h})$ accuracy.
At one point, we observe a great jump in accuracy as $h$ is refined slightly.  This is due to the scaling of 
the number of boundary points $n_B \approx n^{1/4}$, which experiences a discrete jump from 2 to 3. The resulting improvement in boundary resolution leads to a corresponding decrease in the angular error $d\phi$ near the boundary.  This is of particular importance in this type of fully non-aligned PDE operator, for which the angular discretization error can easily dominate.

\begin{figure}[htbp]
		\begin{center}
\includegraphics[width=0.6\textwidth]{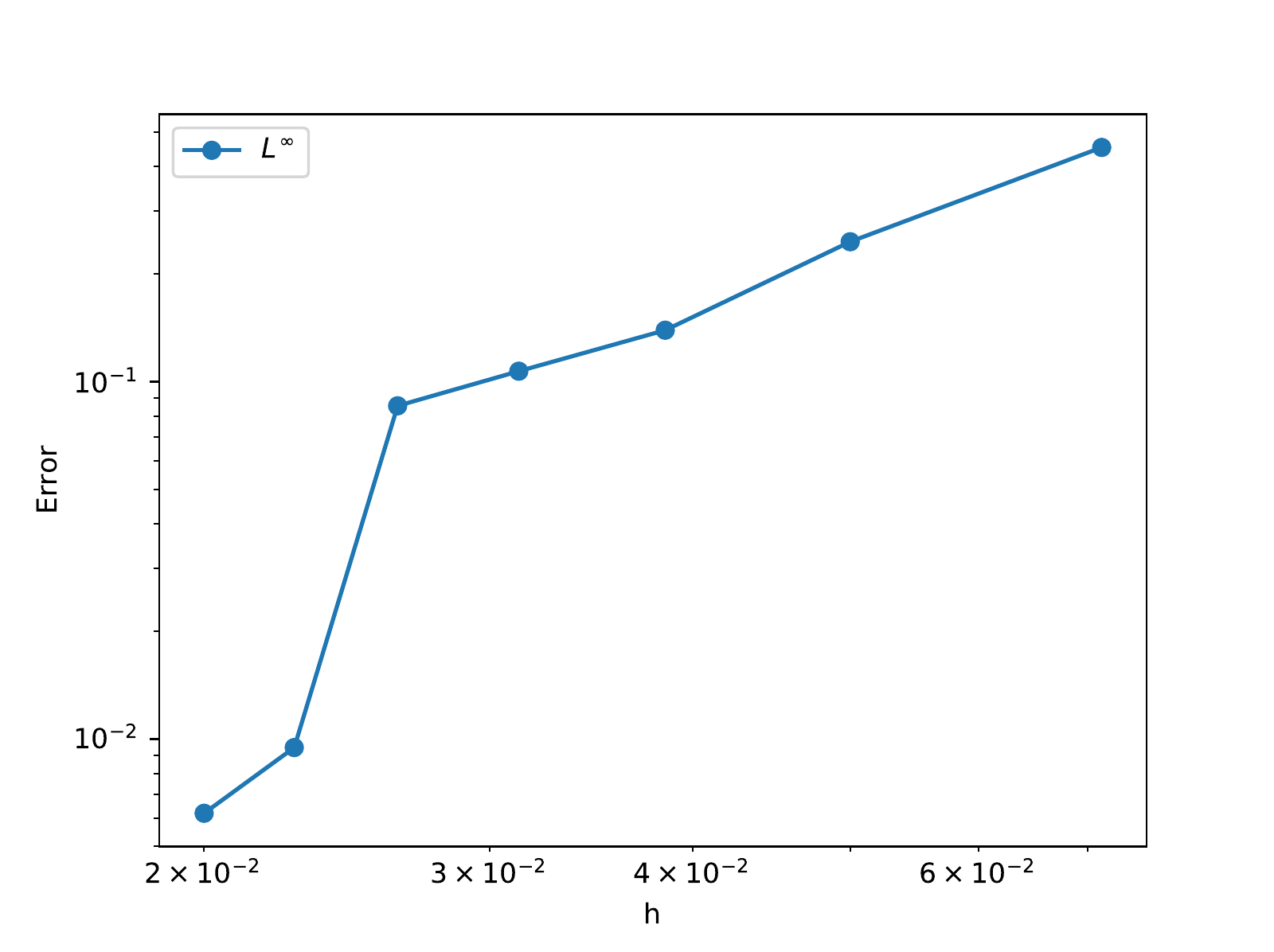}
		\end{center}
\caption{A convergence plot for the linear degenerate equation~\eqref{eq:linDegen}.}
\label{LinDegen}
\end{figure}

\subsection{Maximum of linear operators}
For a second example, consider the following fully nonlinear PDE:
\bq\label{eq:2ops}
\begin{cases}
\max\{-u_{\bm{\nu}_1\bm{\nu}_1},-u_{\bm{\nu}_2\bm{\nu}_2}\} = f(x,y,z) & x^2 + y^2 + z^2 < 1\\
u(x,y,z) = e^{\frac{x^2+y^2+z^2}{2}} & x^2 + y^2 + z^2 = 1
\end{cases}
\eq
where
\[\bm{\nu}_1 = (1,1,0), \quad
\bm{\nu}_2 = (-1,0,1)\]
and
$$f(x,y,z) = \max\left\{-\frac{1}{2}e^{\frac{x^2+y^2+z^2}{2}}(2+x^2+2xy+y^2),-\frac{1}{2}e^{\frac{x^2+y^2+z^2}{2}}(2+x^2-2xz+z^2)\right\}.$$
The exact solution is $$u(x,y,z) = e^{\frac{x^2+y^2+z^2}{2}}.$$

The convergence plot is presented in Figure~\ref{TwoOp}.
As with the linear degenerate equation, there is a discrete jump at one point due to a discrete increase in the boundary resolution. Once again, we observe better accuracy than the expected $\Or(\sqrt{h})$ for this fully nonlinear problem.

\begin{figure}[htbp]
		\begin{center}
\includegraphics[width=0.6\textwidth]{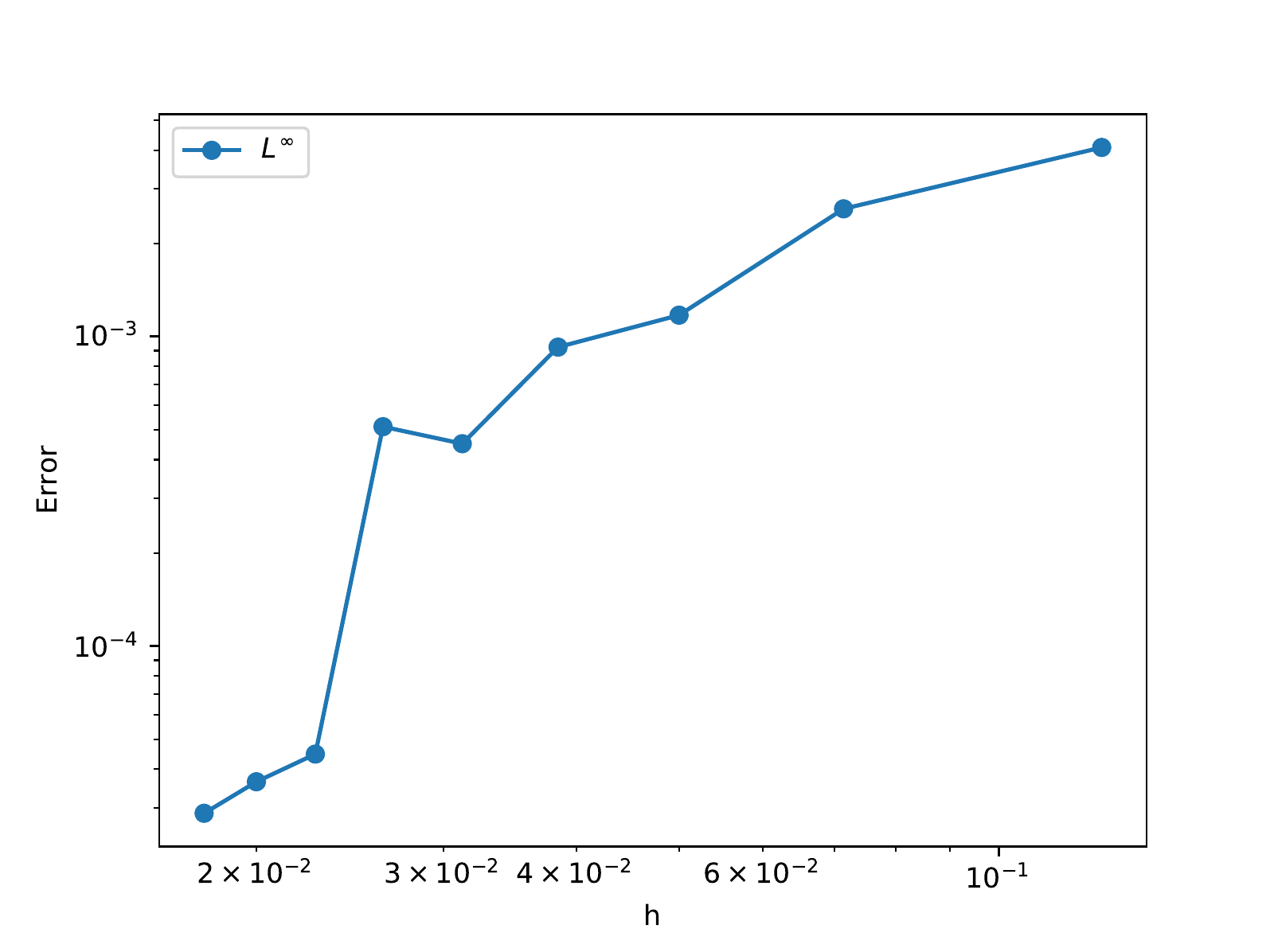}
		\end{center}
\caption{A convergence plot for the two-operator problem~\eqref{eq:2ops}.}
\label{TwoOp}
\end{figure}

\subsection{Convex envelope equation}
Next, we consider a PDE for computing the convex envelope of an obstacle~\cite{ObermanCE}.
\bq\label{eq:CE}
\begin{cases}
\max\{-\la_1(D^2u),u-g\} = 0 & x^2 + y^2 + z^2 < 0.25\\
u = 0.2 &  x^2 + y^2 + z^2 = 0.25
\end{cases}
\eq
where
$$g(x,y,z) =\min\left\{2\sqrt{(x^2+y^2+z^2)},0.2\right\}.$$
In addition to being a fully nonlinear equation, the solution to this PDE is only Lipschitz continuous (but is not differentiable at the origin). Thus, it must be interpreted in a weak sense, and the use of a discretization that converges to the viscosity solution is imperative. The exact solution for this problem is the cone
$$u(x,y,z) = 0.4\sqrt{x^2 + y^2 + z^2}.$$

We remark that this PDE involves only the smallest eigenvalue of the Hessian matrix. It can therefore be characterized using the traditional Rayleigh-Ritz form and discretized as
\[ \lambda_1(D^2u) \approx \min\limits_{\bm{\nu}\in E^h} \Dt_{\bm{\nu}\bm{\nu}}u.  \]

The convergence plot is presented in Figure~\ref{ConvEnv}.
Note that convergence is not monotone in this case.  This is due to effects of variations in the alignment of the grid points for different $n$ (by chance, some small values of $n$ can lead to grids that are very well aligned with the singularity). This effect has previously been observed in two dimensions for problems with very low regularity~\cite{Hamfeldt_Gauss}. Nevertheless, we observe overall convergence close to $\Or(\sqrt{h})$ even on this very singular example.

\begin{figure}[htbp]
		\begin{center}
\includegraphics[width=0.6\textwidth]{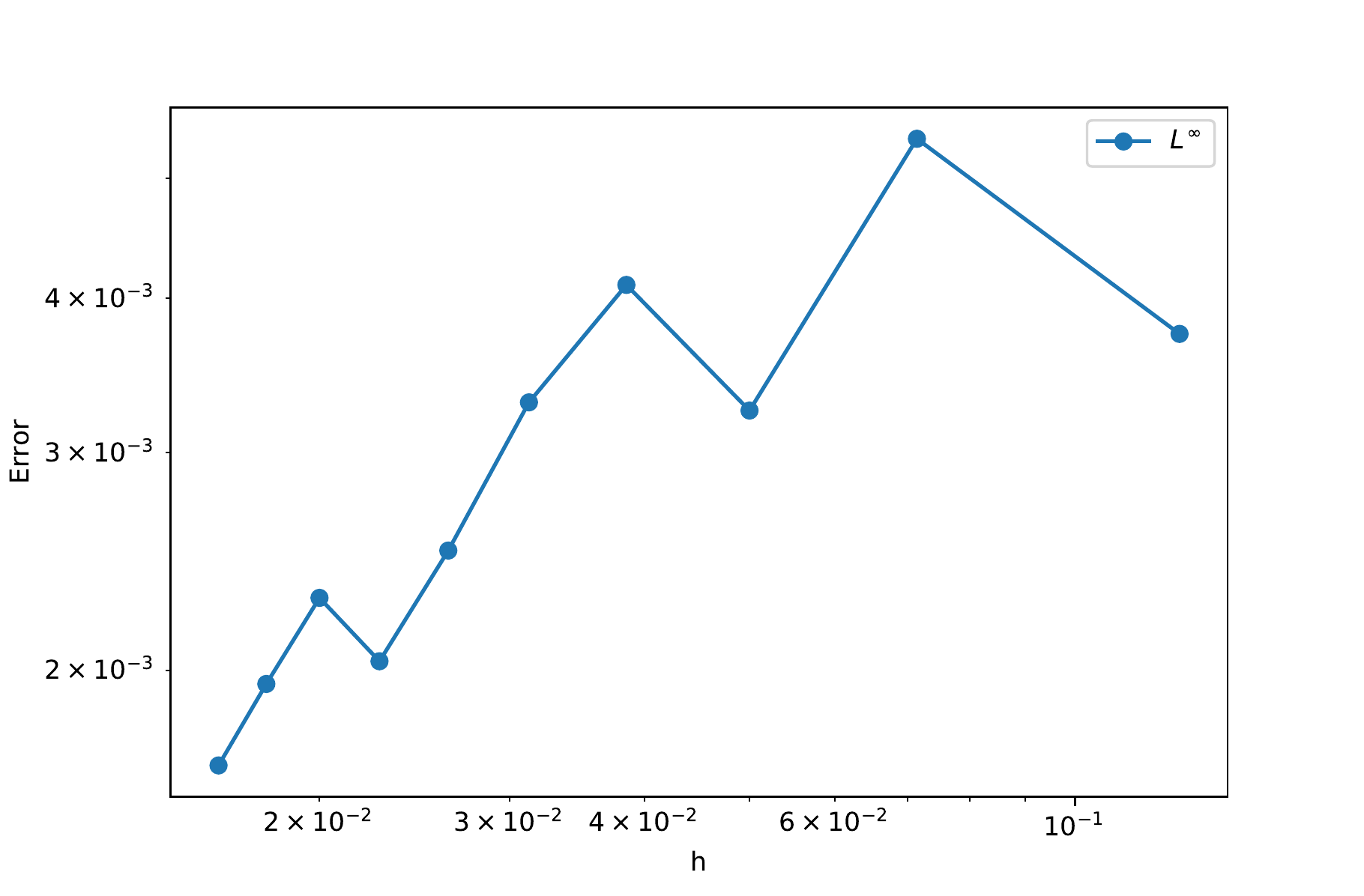}
		\end{center}
\caption{A convergence plot for the convex envelope equation~\eqref{eq:CE}.}
\label{ConvEnv}
\end{figure}

%

\subsection{\MA equation}
Next, we turn our attention to more general functions of the eigenvalues of the Hessian matrix.  We begin with the \MA equation:
\[\begin{cases}
-\det{(D^2u(\xvec))}+f(\xvec) = 0,& \xvec \in \Omega\\
u(x) = g(\xvec) & \xvec\in \partial\Omega\\
u \text{ is convex.} &
\end{cases}
\]
The determinant can be expressed as a product of the eigenvalues.
Since the equation is only elliptic on the space of convex functions, we follow~\cite{FroeseTransport} and use the globally elliptic extension
\bq
-\max{(\la_1,0)}\max{(\la_2,0)}\max{(\la_3,0)} - \left(\min{(\la_1,0)} + \min{(\la_2,0)} + \min{(\la_3,0)}\right)  + f = 0.
\eq
We notice that this can be decomposed into two different functions of the eigenvalues, each of which can be written in the form of~\eqref{eq:PDE2}.  That is, let $\phi_1(x) = \log\max\{x,0\}$, $G_1(x) = -e^x$, $\phi_2(x)=\min\{x,0\}$, and $G_2(x) = -x$.  Then we can re-express this \MA equation as
\[ G_1\left(\sum\limits_{j=1}^3\phi_1(D^2u(\xvec))\right) + G_2\left(\sum\limits_{j=1}^3\phi_2(D^2u(\xvec))\right) + f(\xvec) = 0, \]
similar to~\cite{FO_MATheory}.  This now fits within the framework we require to produce consistent, monotone approximations.

Consider the specific example
\bq\label{eq:MA}
\begin{cases}
-\det{(D^2u(x,y,z))}+e^{\frac{3}{2}(x^2 + y^2 + z^2)}(1+x^2+y^2+z^2)= 0,& x^2+y^2+z^2 < .25\\
u(x,y,z) = e^{\frac{x^2+y^2+z^2}{2}} & x^2+y^2+z^2 = .25\\
u \text{ is convex.} &
\end{cases}
\eq
with the exact solution being
$$u(x,y,z) = e^{\frac{x^2+y^2+z^2}{2}}.$$

The results are included in Figure \ref{MAfig}.  On this example, we also observe better than the expected $\bO(\sqrt{h})$ convergence.

\begin{figure}[htbp]
		\begin{center}
\includegraphics[width=0.6\textwidth]{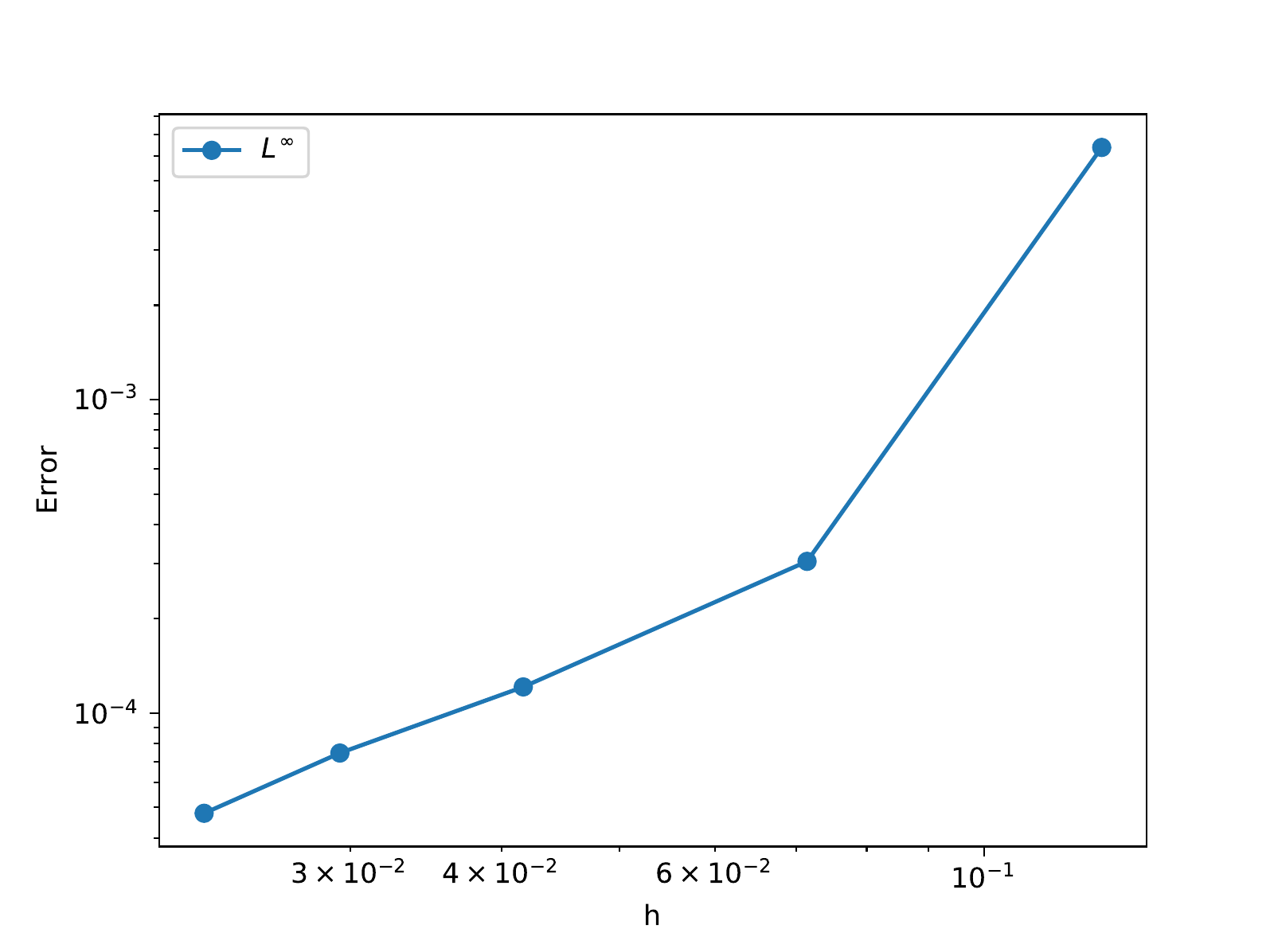}
		\end{center}
\caption{A convergence plot for the \MA equation~\eqref{eq:MA}.}
\label{MAfig}
\end{figure}


\subsection{Neumann boundary conditions}

Next, we consider Poisson's equation with Neumann boundary conditions. 
The point of this example is, of course, not to produce a new method for solving Poisson's equation.  Instead, we use it to test our characterization of functions of the eigenvalues of the Hessian, our generalized finite difference implementation of Neumann boundary conditions, and our ability to solve eigenvalue problems.

We recall that the data must satisfy a solvability condition in order for a solution to exist.  Moreover, even if the continuous problem is well-posed, the discretized problem need not be~\cite{HL_lagrangian}. Therefore, we choose to frame this as the following eigenvalue problem:
\bq\label{eq:neumann}
\begin{cases}
-\left(\la_1(D^2u) + \la_2(D^2u)+\la_3(D^2u)\right) = cf(x,y,z) & x^2 + y^2 + z^2 < 1\\
\frac{\partial u(x,y,z)}{\partial \hat{n}} = e^{\frac{1}{2}} & x^2 + y^2 + z^2 = 1
\end{cases}
\eq
where 
$$f(x,y,z) =(3+x^2+y^2+z^2)e^{\frac{x^2+y^2+z^2}{2}}.$$
The exact solution is $$u(x,y,z) = e^{\frac{x^2+y^2+z^2}{2}}$$ with $c = 1$.

To further test our characterization of functions of the eigenvalues of the Hessian, we also note that the Laplacian does trivially have the form of~\eqref{eq:PDE2} with $\phi(x) =x$ and $G(x) = -x$.
The results are presented in Figure \ref{3dPoissonNeumann}.  On average we observe the expected $\Or(\sqrt{h})$ accuracy on this example.

\begin{figure}[htbp]
		\begin{center}
\includegraphics[width=0.6\textwidth]{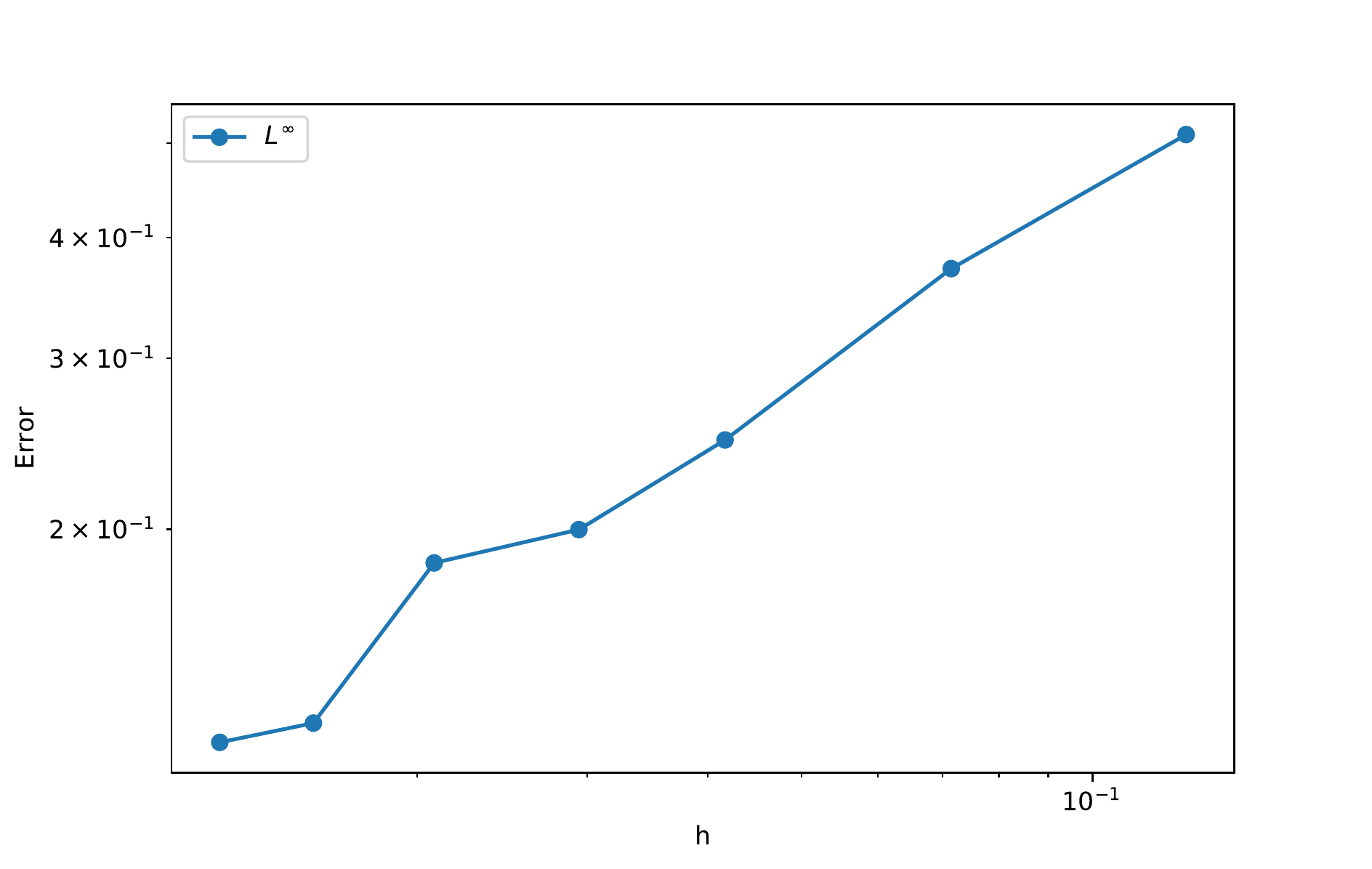}
		\end{center}
\caption{A convergence plot for Poisson's equation with Neumann boundary conditions~\eqref{eq:neumann}.}
\label{3dPoissonNeumann}
\end{figure}

\subsection{Second type boundary conditions}
Finally, we consider the problem of computing minimal Lagrangian graphs~\cite{brendle2010}.  This is an eigenvalue problem for a nonlinear PDE, equipped with the second type (optimal transport) boundary condition.
Specifically, we seek a convex function $u$ and a scalar constant $c\in\R$ satisfying 
\bq\label{eq:lagrange}
\begin{cases}
\itan{\la_1(D^2u)} + \itan{\la_2(D^2u)}+\itan{\la_3(D^2u)} = c, & x^2 + y^2 + z^2 < 1\\
\nabla u(\mathbb{S}^2) \subset T(\mathbb{S}^2).
\end{cases}
\eq
where $T(x,y,z) = (x+2,y+1,z-1)$ is an affine shift. 
The exact solution is $$u(x,y,z) = \frac{(x+2)^2+(y+1)^2+(z-1)^2}{2}.$$

In order to discretize this function of the eigenvalues of the Hessian, we first need to put the PDE operator into the form of~\eqref{eq:PDE2}.  To accomplish this, we introduce a modification that agrees with~\eqref{eq:lagrange} on the space of convex functions (which is where the desired solution lives).  In particular, we propose the alternate operator
\[ -\sum\limits_{j=1}^3\left(\itan{\max\{\lambda_j,0\}} + \min\{\lambda_j,0\}\right), \]
which fits immediately into the required form.  The optimal transport type boundary constraint is discretized as described in \autoref{sec:BVP2}.

The computed results are shown in Figure~\ref{3datanTP}. Once again, we observe slightly better than the expected $\Or(\sqrt{h})$ accuracy overall.

\begin{figure}[htbp]
		\begin{center}
\includegraphics[width=0.6\textwidth]{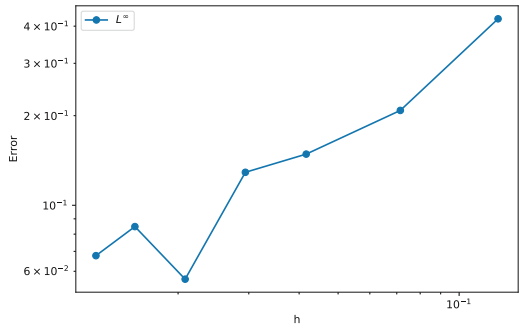}
		\end{center}
\caption{A convergence plot for the computation of minimal Lagrangian graphs~\eqref{eq:lagrange}.}
\label{3datanTP}
\end{figure}

\section{Conclusion}\label{sec:conclusion}
In this paper, we introduced a new monotone finite difference method for solving a wide variety of fully nonlinear elliptic equations in three dimensions.  Because the resulting schemes are monotone, they are guaranteed to converge via the Barles-Souganidis convergence framework~\cite{BSnum} and generalizations of these to non-classical Dirichlet problems~\cite{Hamfeldt_Gauss}, optimal transport problems~\cite{Hamfeldt_BVP2}, and eigenvalue problems involving nonlinear PDEs~\cite{HL_lagrangian}.

In particular, we described a new technique for discretizing general three-dimensional domains, which produces the higher boundary resolution needed to preserve both consistency and monotonicity throughout the entire domain.  We also introduced and analyzed a simple least-squares method for generating consistent, monotone approximations of second directional derivatives.  Moreover, we showed how to use these to construct monotone approximations of a large range of fully nonlinear elliptic operators.  Finally, we produced generalized finite difference approximations for a range of different boundary conditions including Dirichlet, Neumann, and the nonlinear second type (optimal transport) boundary condition.

This paper focused primarily on efficiently constructing these approximations in three dimensions, which is much more expensive than the analogous problem in two dimensions.  Because our grids inherit much of the structure of a Cartesian grid, constructing the finite difference stencils is straightforward throughout most of the domain.  A more serious computational challenge in three dimensions is evaluating nonlinear operators that require computing a maximum/minimum over many different orthogonal coordinate frames; this is needed for many fully nonlinear operators.  We proposed a multilevel approach to this evaluation of the nonlinear operators, which converted the problem from one that is completely intractable in 3D to a very efficient process.

In future work, we intend to leverage this new scheme, which can be evaluated very efficiently, to produce numerical methods that are both efficient and higher-order.  In particular, we will develop faster solvers for the discretized systems that utilize the underlying structure of the monotone approximations.  We will also use these as a foundation for convergent, higher-order filtered methods~\cite{FOFiltered}.

\bibliographystyle{plain}
\bibliography{PaperBibliography}
\end{document}